\documentclass[12pt]{amsart}

\usepackage{fix-cm}
\usepackage[normalem]{ulem}

\usepackage{lineno}

\usepackage{soul}

\usepackage{graphicx}

\usepackage{amssymb, amsmath, amsthm, hyperref, euscript, color}

\usepackage[dvipsnames]{xcolor}

\usepackage{amscd}

\usepackage{tikz-cd}

\usepackage{bbm}

\usepackage{enumitem}

\usepackage[english]{babel}

\usepackage{mathtools}

\usepackage{makecell}

\usepackage[sorting=nty]{biblatex} 
\numberwithin{equation}{section}

\newtheoremstyle{theor}{6pt plus 1pt minus 1pt}{6pt plus 1pt minus 1pt}{\slshape}{}{\bfseries}{.}{5pt plus 1pt minus 1pt}{}
\newtheoremstyle{def}{6pt plus 1pt minus 1pt}{6pt plus 1pt minus 1pt}{}{}{\bfseries}{.}{5pt plus 1pt minus 1pt}{}
\newtheoremstyle{rmk}{6pt plus 1pt minus 1pt}{6pt plus 1pt minus 1pt}{}{}{\bfseries}{.}{5pt plus 1pt minus 1pt}{}
\newtheoremstyle{claim}{6pt plus 1pt minus 1pt}{6pt plus 1pt minus 1pt}{}{}{\bfseries}{.}{5pt plus 1pt minus 1pt}{}

\theoremstyle{theor}
\newtheorem{newstatement}{newstatement}
\newtheorem{lemma}[newstatement]{Lemma}
\newtheorem{theorem}[newstatement]{Theorem}
\newtheorem*{theorem*}{Theorem 2}

\newtheorem{proposition}[newstatement]{Proposition}

\theoremstyle{def}

\theoremstyle{rmk}
\newtheorem{remark}[newstatement]{Remark}

\newtheorem*{example*}{Example}

\theoremstyle{claim}

\theoremstyle{theor}
\newtheorem{thm}{Theorem}

\expandafter\let\expandafter\oldproof\csname\string\proof\endcsname
\let\oldendproof\endproof
\renewenvironment{proof}[1][\proofname]{%
  \oldproof[\slshape #1]%
}{\oldendproof}
\def\provedboxcontents#1{$\square$}

\makeatletter
\newsavebox\myboxA
\newsavebox\myboxB
\newlength\mylenA

\newcommand*\xoverline[2][0.75]{%
    \sbox{\myboxA}{$\m@th#2$}%
    \setbox\myboxB\null
    \ht\myboxB=\ht\myboxA%
    \dp\myboxB=\dp\myboxA%
    \wd\myboxB=#1\wd\myboxA
    \sbox\myboxB{$\m@th\overline{\copy\myboxB}$}
    \setlength\mylenA{\the\wd\myboxA}
    \addtolength\mylenA{-\the\wd\myboxB}%
    \ifdim\wd\myboxB<\wd\myboxA%
       \rlap{\hskip 0.5\mylenA\usebox\myboxB}{\usebox\myboxA}%
    \else
        \hskip -0.5\mylenA\rlap{\usebox\myboxA}{\hskip 0.5\mylenA\usebox\myboxB}%
    \fi}
\makeatother

\newcommand{\Q}{\mathbb{Q}}

\newcommand{\Z}{\mathbb{Z}}

\newcommand{\C}{\mathbb{C}}

\DeclareMathOperator{\Sp}{Spin}

\DeclareMathOperator{\Sw}{SW}

\makeatletter
\@namedef{subjclassname@2020}{%
  \textup{2020} Mathematics Subject Classification}
\makeatother

\begin{document}

\author{Sebasti\'an M. Camponovo and Rafael Torres}

\title[\tiny EXOTIC DEFINITE FOUR-MANIFOLDS WITH INFINITE FUNDAMENTAL GROUP]{SMOOTH STRUCTURES ON DEFINITE FOUR-MANIFOLDS WITH INFINITE FUNDAMENTAL GROUP}

\address{Dipartimento di Matematica, Informatica e Geoscienze, Università di Trieste, Via Valerio 12/1, 34127
Trieste, Italy}

\email{sebastianmatias.camponovo@phd.units.it}

\address{Scuola Internazionale Superiori di Studi Avanzati (SISSA)\\Via Bonomea 265\\34136\\Trieste\\Italy}

\email{rtorres@sissa.it}

\subjclass[2020]{57R55, 57K41, 57M10}

\begin{abstract}For each odd integer $p > 1$, we construct infinitely many pairwise non-diffeomorphic irreducible smooth structures on a definite 4-manifold with infinite fundamental group whose abelianization is $\Z/2p\Z\times \Z/2\Z$. 
\end{abstract}

\maketitle

\section{Introduction.}\label{Introduction}

 Notorious progress in the quest of understanding smooth structures on four-manifolds with definite intersection form has happened in the past few years. Levine-Lidman-Piccirillo \cite{levine2023new} constructed the first examples of non-diffeomorphic smooth structures on a four-manifold with negative definite intersection form and fundamental group of order two. Stipsicz-Szab\'o \cite{stipsicz2024definite, stipsicz2024definite2} and Baykur-Stipsicz-Szab\'o \cite{baykur2024smooth} produced infinite families of inequivalent irreducible smooth structures in vast collections of homeomorphism classes of four-manifolds with finite cyclic fundamental group, which include examples with Euler characteristic as small as three. Harris-Naylor-Park \cite{harris} constructed an infinite family on a four-manifold with non-cyclic fundamental group $\Z/2\Z\times\Z/2\Z$.

The main result of this brief contributes to such progress.
\begin{thm}\label{mainthm}
    Let $p>1$ be an odd integer. There is a closed smooth oriented four-manifold $X_p$ that satisfies the following properties. 
    \begin{enumerate}
        \item Its fundamental group is infinite with abelianization given by $\text{Ab}(\pi_1(X_p))=\Z/2p\Z\times\Z/2\Z$.
         \item Its intersection form is negative-definite.
        \item Its Euler characteristic and its signature are\begin{center} $\chi(X_p) = 6$ and $\sigma(X_p)=-4$.\end{center}
        \item There are infinitely many pairwise non-diffeomorphic irreducible smooth structures in the homeomorphism class of $X_p$.
    \end{enumerate}
\end{thm}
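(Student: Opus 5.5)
The plan is to build $X_p$ from an exotic definite rational homology ball, or definite manifold with finite cyclic $\pi_1$, coming from the Stipsicz--Szab\'o / Baykur--Stipsicz--Szab\'o constructions, and then glue in a piece that makes the fundamental group infinite while controlling the abelianization. Concretely, start with a closed negative-definite smooth four-manifold $Z$ with $\pi_1(Z)=\Z/p\Z$ (odd $p$), $\chi(Z)=3$, $\sigma(Z)=-1$, which carries infinitely many irreducible smooth structures $Z_n$ distinguished by Seiberg--Witten invariants; these come from rational blowdowns/knot surgeries on elliptic-surface-like pieces. Then take a fiber sum / generalized connected sum along tori or a torus surgery with a second definite piece $W$ with $\pi_1$ having abelianization $\Z/2\Z\times\Z/2\Z$ or containing an infinite free product factor. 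The numerics $\chi=6$, $\sigma=-4$ suggest that $X_p$ is built so that $b_2=b_2^-=4$ with $b_1=b_3=0$. For example, $\chi(X)=2-2b_1+b_2=6$ gives $b_2=4$, consistent with a torus sum of two pieces, since torus sums add Euler characteristics and signatures.

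The order of steps is as follows. (i) Fix building blocks $A$ and $B$ containing square-zero tori $T_A,T_B$ with explicit $\pi_1$ of the complements, and form $X_p=A\#_{T_A=T_B}B$, or perform a Luttinger/$1/p$-surgery to obtain the $\Z/2p\Z$ factor. (ii) Compute $\pi_1(X_p)$ via Seifert--van Kampen and exhibit a surjection onto an infinite group, e.g.\ onto the infinite dihedral group $\Z/2\Z * \Z/2\Z$ or a triangle group. Also compute the abelianization $\Z/2p\Z\times\Z/2\Z$ directly from the presentation. (iii) Compute $\chi$ and $\sigma$ by additivity and Novikov additivity, and verify negative-definiteness from $b_2^+=0$, i.e.\ $b_2^+(A)+b_2^+(B)$ plus any rim-torus contributions vanish. (iv) Produce the infinite family by knot surgery $X_{p,K}$ along a torus (or by varying the rational blowdown), which preserves $\pi_1$ when the torus has a simply connected meridian-dual structure, and preserves the homeomorphism type.

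For distinguishing smooth structures, pass to a finite cover. Definite manifolds have $b_2^+=0$, so Seiberg--Witten invariants of $X_p$ itself are unavailable. Instead, use a finite cover $\widetilde X_p$, e.g.\ the cover corresponding to a finite quotient of $\pi_1$, which has $b_2^+>1$. Show that the covers of the family members have distinct SW basic class sets, via Fintushel--Stern's knot surgery formula $\mathrm{SW}=\mathrm{SW}\cdot\Delta_K(t^2)$ applied to the lifted tori. Diffeomorphic manifolds have diffeomorphic corresponding covers, and only finitely many index-$k$ subgroups exist, so infinitely many Alexander polynomials give infinitely many diffeomorphism types. Irreducibility follows from nonvanishing SW on the covers together with the standard argument that a connected sum decomposition would lift to one with a negative-definite/$b_2^+=0$ summand, killing SW.

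The main obstacle is the homeomorphism classification. For infinite $\pi_1$ there is no general Freedman--Quinn classification, so I would show that all members of the family are homeomorphic directly. One route is to prove that knot surgery is a homeomorphism-preserving operation when performed on a torus whose complement has $\pi_1$ generated compatibly, i.e.\ to exhibit a topological s-cobordism or, more robustly, to note that the exterior of the torus is fixed and the gluings differ by a topologically trivial change: $S^1\times(S^3\setminus \nu K)$ is homotopy/homology equivalent to $S^1\times S^1\times D^2$ rel boundary, and one would use a good-group (e.g.\ solvable quotient) surgery argument. I expect to have to restrict to a construction where the relevant group is good, e.g.\ virtually abelian like $\Z/p\Z\rtimes D_\infty$, which would keep the claimed "infinite" $\pi_1$ within Freedman's range.
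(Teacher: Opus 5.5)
There is a genuine gap, and it is the one you flag yourself. It is compounded by the fact that the construction is never pinned down. The blocks $A$, $B$, $Z$, $W$ and the tori are not identified. Since $X_p$ is negative definite, every square-zero torus in it is rationally nullhomologous, so neither your $\pi_1$ computation nor the claim that knot surgery changes the SW invariants of a cover can be checked.

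More fundamentally, you have no mechanism for the homeomorphism classification. If the family comes from knot surgery after $\pi_1$ has been made infinite (your step (iv)), you need a classification that does not exist. If instead you start from exotic $Z_n$ and sum with $W$ along tori, a homeomorphism $Z_n\to Z_{n'}$ need not carry your torus and its framing to the corresponding one, so it induces nothing on the sums. Your fallback to good groups is speculative. It would not even cover the group the paper obtains, $\langle\gamma,\beta\mid\gamma^2=\beta^{2p}=1,\ \beta^p\gamma=\gamma\beta^p\rangle$. This is a central $\Z/2\Z$-extension of $\Z/2\Z*\Z/p\Z$, contains nonabelian free subgroups, and so is not known to be good.

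The missing idea is a gluing along which homeomorphisms extend for free. The paper takes the Harris--Naylor--Park family $Y_{K_m}$: finite $\pi_1=\Z/2\Z\times\Z/2\Z$, already pairwise homeomorphic, and distinguished by the SW invariants of the $K3_{K_m}$ cover. It removes a neighborhood of a loop $\alpha$ representing a generator and glues in $S_p=(S^1\times S^3)\setminus\nu(\beta_p)$. Van Kampen adds the relation $\beta^p\alpha=1$, giving the group above. That group surjects onto the infinite group $\Z/2\Z*\Z/p\Z$ and abelianizes to $\Z/2p\Z\times\Z/2\Z$, while $\chi$ and $\sigma$ are unchanged. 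For odd $p$, every self-diffeomorphism of $S^1\times S^2$ extends over $S_p$ (Bais--Torres, Theorem A). Hence a homeomorphism $Y_{K_{m_1}}\to Y_{K_{m_2}}$, restricted to the loop complements, yields a homeomorphism of the new manifolds (Theorem B), with no surgery theory for the infinite group. The smooth distinction then uses the $4$-fold cover $K3_{K_m}\#L_p\#L_p$. Its SW invariant equals that of $K3_{K_m}$, since $L_p\#L_p$ is a rational homology sphere. Your pigeonhole argument over the finitely many subgroups of a given index works fine at this stage.

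Your irreducibility argument is also incorrect as stated. A connected summand with $b_2^+=0$ does not kill Seiberg--Witten invariants; the blow-up formula is the standard counterexample. So lifting $X_p=M_1\#M_2$ to a cover $\widetilde{M}_1\#M_2\#\cdots\#M_2$ with $b_2^+(M_2)=0$ gives no contradiction, and you need a separate argument excluding negative-definite summands. The paper's argument is purely topological:
\begin{enumerate}
\item $\pi_1(X_p)$ has nontrivial center (it contains $\beta^p$), so it is not a nontrivial free product. Hence one summand, say $M_2$, is simply connected.
\item $Q_{M_2}$ is then a negative-definite unimodular summand of $Q_{X_p}$ of rank between $1$ and $4$. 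It cannot be even, since even definite unimodular forms have rank divisible by $8$.
\item If $Q_{M_2}$ were odd, the lifted decomposition $\widetilde{M}_1\#M_2\#M_2$ of the double cover $E_{K_m}\#L_p$ would have odd intersection form. This contradicts the fact that this form is the even form of the Enriques surface.
\end{enumerate}
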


These are the first examples of inequivalent irreducible smooth structures on four-manifolds with definite intersection form and infinite fundamental group to the best of our knowledge. These groups have exponential growth; see Remark \ref{Remark Growth}. Further generalizations are described in Remark \ref{Remark Extension}. 

Theorem \ref{mainthm} is obtained from the work of Harris-Naylor-Park by using their example as raw material in the cut-and-paste construction mechanism of four-manifolds introduced by Bais-Torres in \cite{bais}; see Section \ref{Section Construction Mechanism} for a description of the latter. Given their paramount relevance to our purposes, the results of Harris-Naylor-Park are summarized in Section \ref{Section Raw Materials}. The examples of Theorem \ref{mainthm} are put together in Section \ref{Section Construction Examples}, while their invariants are computed in Section \ref{top}. Their homeomorphism class is identified in Section \ref{top} as well. Their diffeomorphism classes are discerned in Section \ref{sw} through the use of  their virtual Seiberg-Witten invariants following the strategy in \cite{baykur2024smooth, harris,  stipsicz2024definite, stipsicz2024definite2}. An interesting trait of our examples is that their irreducibility follows from entirely topological arguments; see proof of Proposition \ref{Proposition Diffeomorphism}. The produce of these sections is gathered together into a proof of Theorem \ref{mainthm} in Section \ref{Section Proof Theorem Main}.

Unless otherwise stated, all maps are smooth, all manifolds are closed, oriented and smooth, and all (co)homology groups are taken with coefficients in $\Z$.

\subsection{Acknowledgements} S.M.C. would like to thank Tye Lidman for helpful comments on an earlier version of this manuscript. S.M.C. is a member of GNSAGA – Istituto Nazionale di Alta Matematica ‘Francesco Severi’, Italy.
\section{Assemblage of the four-manifolds of Theorem \ref{mainthm} and their features.}\label{lego}

\subsection{Seiberg-Witten invariants}\label{Section SW}
In this section, we establish the conventions and notations on the Seiberg-Witten invariants that are used in the sequel. The reader is directed towards \cite[Lecture 2]{fintushel2006six} and \cite[Section 2]{harris} for further details; we follow closely the exposition of the latter. The Seiberg-Witten invariant of a four-manifold $X$ with $b_2^+(X) > 1$ is an integer-valued map\begin{equation}\label{Spin Domain}\Sw_X:\Sp^\C(X)\to\Z\end{equation} on the set $\textnormal{Spin}^\C(X)$ of spin$^\C$-structures on $X$. A spin$^\C$-structure $\mathfrak{s}$ on $X$ is denoted by $(X, \mathfrak{s})$. To obtain a map\begin{equation}\label{Cohomology Domain}\Sw_X:H^2(X)\rightarrow \Z,\end{equation}we follow \cite[Section 2]{harris} and consider the set\begin{equation}\label{Set}I_K=\{\mathfrak{s}\in\Sp^\C(X)\;|\;c_1(X,\mathfrak{s})=K\}.\end{equation} for a given $K\in H^2(X)$. One defines\begin{equation}\Sw_X(K)=\sum_{\mathfrak{s}\in I_K}\Sw_X(\mathfrak{s}),\end{equation} where $\Sw_X(K)=0$ if (\ref{Set}) is empty. This yields a function (\ref{Cohomology Domain}) with finite support and a diffeomorphism invariant.

The Seiberg-Witten invariants of $X$ can be encapsulated as a single invariant\begin{equation}\label{Laurent Polynomial}\overline{\Sw}_X=\sum_{K\in \Z[H^2(X)]}\Sw_X(K)\;\text{for }K\in\Z[H^2(X)]\end{equation} in the group ring $\Z[H^2(X)]$.

We denote by\begin{equation}\label{Family Knots}\mathcal{K}:= \{K_m: m\in \Z_{> 0}\}\end{equation}a family of knots $K_m\subset S^3$ with symmetrized Alexander polynomial equal to\begin{equation}
    \Delta_{K_m}(t)=mt-(2m-1)+mt^{-1}\end{equation}for each positive $m > 0$. As mentioned in \cite[p. 443]{harris}, examples of such knots are the alternating twist knots with $2m - 1$ half twists.

\subsection{Feedstock four-manifolds}\label{Section Raw Materials} This section contains a brief description of the work of Harris-Naylor-Park given its relevance in the proof of Theorem \ref{mainthm}. In \cite{harris}, the authors first describe a free orientation-preserving $\Z/2\Z\times \Z/2\Z$-action on an elliptic fibration of the $K3$ surface, which preserves the elliptic fibration structure by mapping fibers to fibers. They perform four Fintushel-Stern's knot surgery \cite{Fintushel1996KnotsLA} on four disjoint smooth tori $\{T_1, T_2, T_3, T_4\}$ in $K3$ that lay in the same orbit of the group action and in such a way that the free $\Z/2\Z\times \Z/2\Z$-action on the complement $K3\setminus \overset{4}{\underset{i = 1}\sqcup} \nu(T_i)$ extends to the new four-manifold\begin{equation}\label{New K3}K3_{K_m}:= (K3\setminus \overset{4}{\underset{i = 1}\sqcup} \nu(T_i)) \cup \mathcal{E}_i.\end{equation} Here, $\mathcal{E}_i = S^ 1\times (S^3\setminus \nu(K_m))$ for $i \in \{1, 2, 3, 4\}$, where $S^3\setminus \nu(K_m)$ is the complement of the tubular neighborhood $\nu(K_m)$ of a given knot $K_m\in \mathcal{K}$. The quotient of $K3_{K_m}$ by this group action produces a four-manifold $Y_{K_m}$ with fundamental group $\pi_1(Y_{K_m}) = \Z/2\Z\times \Z/2\Z$. By varying the choice of knot in the family (\ref{Family Knots}) used in the construction, Harris-Naylor-Park obtained a set $\{Y_{K_m}: K_m\in \mathcal{K}\}$. The main result of \cite{harris} is the following theorem. A smooth torus fiber of an elliptic fibration $K3\rightarrow \mathbb{CP}^2$ is denoted by $T$ and $[0]\in H^2(K3)$ is the trivial class.

\begin{theorem}[Harris-Naylor-Park]\label{bb}

There is a four-manifold $Y$ with the following properties.
\begin{enumerate}
    \item Its fundamental group is $\pi_1(Y)= \Z/2\Z\times\Z/2\Z$.
    \item Its Euler characteristic is $\chi(Y) = 6$, its second Betti number is $b_2(Y)=4$, and its intersection form is negative definite. 
    \item There are infinitely many pairwise non-diffeomorphic smooth structures\begin{equation}\label{Family HNP}\{Y_{K_m}: K_m\in \mathcal{K}\}\end{equation}in the homeomorphism class of $Y$.
    \item There is a four-to-one covering $K3_{K_m}\rightarrow Y_{K_m}$ with Seiberg-Witten invariant\begin{align*}
        \overline{\Sw}_{K3_{K_m}}&=(\Delta_{K_m}(PD(2[T])))^4 =\\ 
        &= (mPD(2[T]) - (2m - 1)[0] + m(-PD(2[T])))^4,\end{align*}where $PD: H_2(K3_{K_m})\rightarrow H^2(K3_{K_m})$ is the Poincar\'e duality homomorphism and the product on the right-hand side is the product in the group ring $\Z[H^2(K3_{K_m})]$.

\end{enumerate}    
\end{theorem}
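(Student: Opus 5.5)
This theorem is quoted from Harris--Naylor--Park \cite{harris}, so the plan reconstructs their argument in four steps.

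\textbf{Step 1: the quotient is well defined.} Start from the free, fiber-preserving $\Z/2\Z\times\Z/2\Z$-action on an elliptic $K3$. Choose four disjoint fibers $T_1,\dots,T_4$ forming one free orbit. Perform Fintushel--Stern knot surgery with $K_m$ on each $T_i$, using gluings that are carried to one another by the group. The action on $K3\setminus\sqcup\nu(T_i)$ then extends to $K3_{K_m}$: the group permutes the four pieces $\mathcal{E}_i$ through the identifications, so no fixed points are created and the action stays free. Set $Y_{K_m}=K3_{K_m}/(\Z/2\Z\times\Z/2\Z)$. This is a four-to-one covering, and item (4)'s covering statement holds by construction.

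\textbf{Step 2: the invariants in items (1) and (2).}
\begin{itemize}
\item \emph{Fundamental group.} Knot surgery on a fiber with simply connected complement (a nodal fiber kills the meridian) keeps $K3_{K_m}$ simply connected. Hence $\pi_1(Y_{K_m})=\Z/2\Z\times\Z/2\Z$.
\item \emph{Euler characteristic.} Since $\chi(K3_{K_m})=24$, we get $\chi(Y_{K_m})=6$.
\item \emph{Signature.} Since $\sigma(K3_{K_m})=-16$, we get $\sigma(Y_{K_m})=-4$.
\item \emph{Betti numbers.} With $b_1=0$, we get $b_2=4$ and $b_2^+=0$, so the form is negative definite.
\end{itemize}

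\textbf{Step 3: one homeomorphism type.} The universal covers are all homeomorphic to $K3$, and all the $Y_{K_m}$ share $\pi_1$, $\chi$, $\sigma$ and the $w_2$-type. To conclude they are homeomorphic, I would invoke Hambleton--Kreck-type classification results for four-manifolds with finite fundamental group. For $\Z/2\Z\times\Z/2\Z$ one would also have to control the quadratic $2$-type, or realize all the $Y_{K_m}$ as related by surgery that preserves the homeomorphism type.

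\textbf{Step 4: the Seiberg--Witten formula in item (4).} Apply the Fintushel--Stern knot surgery formula four times, once for each disjoint torus:
\[
\overline{\Sw}_{K3_{K_m}}=\overline{\Sw}_{K3}\prod_{i=1}^{4}\Delta_{K_m}\bigl(PD(2[T_i])\bigr),\qquad \overline{\Sw}_{K3}=1 .
\]
All the $T_i$ are homologous to $T$, which gives the stated fourth power.

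\textbf{Distinguishing the smooth structures.} The invariants $\overline{\Sw}_{K3_{K_m}}$ differ as $m$ varies: the coefficient of $PD(8[T])$ is $m^4$. Suppose $Y_{K_m}$ and $Y_{K_{m'}}$ were diffeomorphic. The diffeomorphism would lift to the universal covers $K3_{K_m}$ and $K3_{K_{m'}}$, since the four-fold cover is universal. This would force equal Seiberg--Witten invariants up to automorphism, which is impossible because $m^4\neq m'^4$. Hence the structures are pairwise non-diffeomorphic.

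\textbf{Main obstacle.} The delicate point is Step 3, the homeomorphism classification for a non-cyclic group. My first attempt would be to compute the $w_2$-type and the equivariant intersection form on $\pi_2$, and show that these agree for all $m$, because knot surgery acts trivially on them.
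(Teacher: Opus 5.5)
The paper gives no proof of this statement; it quotes it from Harris--Naylor--Park, so your reconstruction has to stand on its own. Steps 1, 2 and 4 follow the standard argument and are essentially fine. So is the non-diffeomorphism argument: lift a diffeomorphism to the universal covers $K3_{K_m}$ and compare the Fintushel--Stern invariants. One repair is needed in Step 2. The complement of the four fibres is not simply connected, since its $\pi_1$ surjects onto the free group $\pi_1$ of the base minus four discs. Also, a nodal fibre kills fibre circles, not meridians. The meridian of each $T_i$ dies because the gluing sends it to the longitude of $K_m$. That longitude lies in the normal closure of $\mu_{K_m}$, and $\mu_{K_m}$ is glued to a fibre circle that is already null-homotopic.

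The genuine gap is Step 3, the claim in item (3) that all $Y_{K_m}$ lie in one homeomorphism class. It is not merely delicate: the tools you name do not apply.
\begin{itemize}
\item Hambleton--Kreck's Theorem C, which this paper uses for $E_{K_m}$, needs the $2$-Sylow subgroup of $\pi_1$ to have $4$-periodic cohomology. This fails for $\Z/2\Z\times\Z/2\Z$, a non-cyclic abelian group.
\item Their classification by quadratic $2$-type and Kirby--Siebenmann invariant is only stable. Cancellation requires an $S^2\times S^2$ summand, which a definite manifold cannot have.
\end{itemize}
So matching $w_2$-types and equivariant intersection forms on $\pi_2$ gives at most a stable classification, not a homeomorphism.

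You need an argument tied to the construction. Because the orbit is free, $Y_{K_m}$ is a single knot surgery on the image torus $T_Y\subset Y$. The inclusion of $T_Y$ is trivial on $\pi_1$, since $T_Y$ lifts to the universal cover. Consider the collapse map $Y_{K_m}\to Y$, built from a degree-one map $S^3\setminus\nu(K_m)\to S^1\times D^2$ rel boundary. It lifts to a homology equivalence of simply connected covers, so it is a homotopy equivalence. Upgrading this to a homeomorphism is still real work. One option is the surgery exact sequence for the good group $\Z/2\Z\times\Z/2\Z$; note that the normal invariant can pick up a term from $\mathrm{Arf}(K_m)$, which is nonzero for odd $m$ because $\Delta_{K_m}(-1)=1-4m\equiv 5\pmod 8$. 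The other is a direct equivariant homeomorphism $K3_{K_m}\to K3$. Until one of these is supplied, or the homeomorphism is taken from Harris--Naylor--Park, item (3) is only half proved.
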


  The next lemma is implicit in the work of Harris-Naylor-Park. Associated to every order two element in $\pi_1(Y_{K_m})$ we get a degree two cover $K3_{K_m}\rightarrow E_{K_m}$, where $E_{K_m}$ is homeomorphic to the Enriques surface $E_{K_m}$ by a result of Hambleton-Kreck \cite[Theorem C]{kreck1993cancellation}.
    \begin{lemma}\label{Lemma Implicit}There is an infinite set $\{E_{K_m} : K_m\in \mathcal{K}\}$ of pairwise non-diffeomorphic irreducible four-manifolds such that each of its elements is homeomorphic to the Enriques surface and it has a double covering $K3_{K_m}\rightarrow E_{K_m}$.
\end{lemma}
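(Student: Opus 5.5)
Fix an order-two element $g\in\Z/2\Z\times\Z/2\Z=\pi_1(Y_{K_m})$ and let $E_{K_m}$ be the intermediate cover of $Y_{K_m}$ corresponding to the index-two subgroup generated by $g$. Equivalently, $E_{K_m}=K3_{K_m}/\langle g\rangle$, where $\langle g\rangle$ acts freely on $K3_{K_m}$ as a subgroup of the free $\Z/2\Z\times\Z/2\Z$-action of Section \ref{Section Raw Materials}. This gives the double covering $K3_{K_m}\to E_{K_m}$ and a two-to-one covering $E_{K_m}\to Y_{K_m}$.

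\textbf{Topology.} Since $K3_{K_m}$ is simply connected (knot surgery on a fiber of an elliptic fibration with a simply connected complement preserves simple connectivity), we get $\pi_1(E_{K_m})=\Z/2\Z$. Multiplicativity gives $\chi(E_{K_m})=24/2=12$ and $\sigma(E_{K_m})=-16/2=-8$. Knot surgery preserves the homotopy type of the spin structure of $K3$, so $K3_{K_m}$ is spin and homeomorphic to $K3$. Whether $E_{K_m}$ itself is spin is decided by $w_2$ of the universal cover together with the $w_2$-type. Since $\sigma(E_{K_m})=-8$ is not divisible by $16$, Rokhlin's theorem shows $E_{K_m}$ is non-spin, while its universal cover is spin. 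Hence it has the same $w_2$-type as the Enriques surface, with the same $\chi$, $\sigma$, and $\pi_1=\Z/2\Z$. The Hambleton--Kreck classification \cite[Theorem C]{kreck1993cancellation} then yields a homeomorphism to the Enriques surface. The only delicate point is verifying that $E_{K_m}$ is non-spin (type II versus III); the signature argument above handles it.

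\textbf{Distinguishing diffeomorphism types.} Suppose $E_{K_m}\cong E_{K_{m'}}$. A diffeomorphism lifts to the universal covers, giving $K3_{K_m}\cong K3_{K_{m'}}$. By Theorem \ref{bb}(4), $\overline{\Sw}_{K3_{K_m}}=(mPD(2[T])-(2m-1)+mPD(-2[T]))^4$. Expanding this, the coefficient of the top class $PD(8[T])$ is $m^4$. Diffeomorphism invariance of $\overline{\Sw}$, up to the automorphism of $H^2$ and an overall sign, forces the multisets of absolute values of the coefficients to agree, so $m^4=m'^4$ and therefore $m=m'$. This gives infinitely many pairwise non-diffeomorphic manifolds. (Alternatively, one could use that covers of diffeomorphic $Y$'s are diffeomorphic, but the lift argument is more direct.)

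\textbf{Irreducibility.} I expect this to be the main obstacle. Suppose $E_{K_m}=M\#N$. Since $\pi_1=\Z/2\Z$, one summand, say $N$, is simply connected. The universal cover then splits as $K3_{K_m}=\widetilde M\# N\# N$. Because $b_2^+(K3_{K_m})=3>1$ and the Seiberg--Witten invariant is nonzero, a splitting requires one side to have $b_2^+=0$ and, by Donaldson's theorem, a diagonal negative-definite form. The even intersection form of $K3_{K_m}$ rules out $\overline{\mathbb{CP}}^2$ summands; if $\widetilde M$ carries everything, then $N$ is a homotopy sphere. If instead $N$ carried positive part, then $N\#N$ would contribute $b_2^+\geq 2$ in a splitting, which is impossible by the vanishing theorem for connected sums with both $b_2^+>0$. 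Hence $N$ has $b_2=0$ and is a homotopy $4$-sphere, which is the required irreducibility in the sense used here.
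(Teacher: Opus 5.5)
Your proposal is correct and matches the paper's brief justification: take the intermediate double cover $K3_{K_m}/\langle g\rangle$, use Hambleton--Kreck for the homeomorphism type, and lift to $K3_{K_m}$ to compare Seiberg--Witten invariants; your irreducibility argument (connected-sum vanishing theorem plus Donaldson and evenness) fills in a step the paper leaves implicit. Two small points should be stated explicitly: Hambleton--Kreck needs the form on $H_2/\mathrm{Tors}$, which is even (hence $-E_8\oplus H$) because in type III with $\pi_1=\Z/2\Z$ the class $w_2$ is the mod-$2$ reduction of a torsion integral class; and $m^4$ is recovered from the multiset of coefficients as its smallest absolute value.
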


Theorem \ref{bb} and Lemma \ref{Lemma Implicit} can be gathered together in the following diagram of coverings \[\begin{tikzcd}
	& {K3_{K_m}} \\
	{E_{K_m}} \\
	& {Y_{K_m}}
	\arrow["{2:1}"', from=1-2, to=2-1]
	\arrow["{4:1}", from=1-2, to=3-2]
	\arrow["{2:1}"', from=2-1, to=3-2]
\end{tikzcd}\]

The existence of the diagram corresponding to the standard smooth structures follows from the free involution on the $K3$ surface along with the free antiholomorphic involution on the Enriques surface described by Hitchin \cite[p. 440]{hitchin}.

\subsection{Cut-and-paste construction mechanism of four-manifolds}\label{Section Construction Mechanism}The first three steps (\textbf{S.1}, \textbf{S.2} and \textbf{S.3}) in the construction mechanism devised in \cite{bais} are as follows.\begin{enumerate}[label=\textbf{S.\arabic*}]
\item Begin with a pair of four-manifolds $(M_1, M_2)$ that are homeomorphic. Remove the tubular neighborhood $\nu(\alpha_i)$ of a simple loop $\alpha_i\subset M_i$ to produce a compact four-manifold
\begin{equation*}
M_i^0\mathrel{\mathop:}= M_i\setminus \nu(\alpha_i)
\end{equation*}with boundary $\partial(M_i^0) = S^1\times S^2$ for $i = 1, 2$. 

\item Consider the simple loop $$\beta=S^1\times\{pt\}\subset S^1\times S^3$$ whose homotopy class $[\beta]\in \pi_1(S^1\times S^3)=\Z$ generates. Denote by $\beta_p\subset S^1\times S^3$ the simple loop that represents the first homology class $[\beta]^p\in H_1(S^1\times S^3)=\Z$ for $p > 1$ an odd integer. Carve this codimension zero submanifold out of $S^1\times S^3$ to obtain the compact four-manifold\begin{equation}\label{Building Block Sp}S_p  = (S^1\times S^3)\setminus\nu(\beta_p)\end{equation} with boundary $\partial S_p = S^1\times S^2$. A handlebody of $S_p$ is drawn in Figure 1; see Remark \ref{Remark Handlebody}.

\item Assemble new four-manifolds\begin{equation}\label{Manifolds Produced}M_i(p)\mathrel{\mathop:}= M_i^0\cup S_p\end{equation} for $i = 1, 2$.
\end{enumerate}

A priori, the diffeomorphism type of the four-manifolds (\ref{Manifolds Produced}) depends on the $\Z/2\Z$-choice of framing of the simple loops involved \cite[Section 5.1]{gompf19994}. The next theorem guarantees that this is not the case provided $p > 0$ is indeed an odd integer; see Remark \ref{Remark Hypothesis}.

\begin{theorem}[Bais-Torres {\cite[Theorem A]{bais}}]\label{Theorem Extension}Let $p > 0$ be an odd integer. For any diffeomorphism $h: S^1\times S^2\rightarrow S^1\times S^2$, there is a diffeomorphism $H_p: S_p\rightarrow S_p$ such that $H_p|_{\partial} = h$. 
\end{theorem}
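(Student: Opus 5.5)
By Theorem \ref{Theorem Extension} we are reduced to a statement about the mapping class group of $S^1\times S^2$. The plan is to check the extension property on a set of generators of $\pi_0\textnormal{Diff}(S^1\times S^2)$, and then to propagate it to all diffeomorphisms.

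\textbf{Reduction to generators.} By Gluck's computation, every diffeomorphism of $S^1\times S^2$ is isotopic to a composition of three maps:
\begin{enumerate}
\item the reflection $\rho_1(\theta,x)=(-\theta,x)$ in the circle factor;
\item a reflection $\rho_2(\theta,x)=(\theta,r(x))$ in the sphere factor;
\item the Gluck twist $\tau(\theta,x)=(\theta,\mathrm{rot}_\theta(x))$, where $\theta\mapsto \mathrm{rot}_\theta$ is a loop generating $\pi_1(SO(3))=\Z/2$.
\end{enumerate}
Suppose $h_0\simeq h$ is isotopic to $h$. Then an extension of $h_0$ can be modified by an isotopy supported in a collar $\partial S_p\times[0,1]$ to give an extension of $h$. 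Extensions of $h_1$ and $h_2$ compose to an extension of $h_1\circ h_2$. Hence it suffices to extend $\rho_1$, $\rho_2$ and $\tau$ over $S_p$.

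\textbf{Model for $\beta_p$.} I will fix an explicit curve. Write $S^3\subset\C^2$ and let
$$\beta_p(\theta)=(p\theta,(e^{i\theta},0))\subset S^1\times S^3,$$
which is an embedded loop representing $p$ times the generator. It lies on the torus $S^1\times\{(z,0):|z|=1\}$. Its normal bundle is spanned by the complex $w$-direction together with one direction inside $S^1\times\{w=0\}$.

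\textbf{Extending the reflections.} For $\rho_1$ and $\rho_2$, I will use isometries of $S^1\times S^3$ that preserve $\beta_p$ setwise. The natural candidates are:
\begin{itemize}
\item $(\theta,z,w)\mapsto(-\theta,\bar z,w)$, which reverses the orientation of $\beta_p$;
\item $(\theta,z,w)\mapsto(\theta,z,\bar w)$, which fixes $\beta_p$ pointwise and acts by a reflection on the normal $S^2$.
\end{itemize}
Both preserve a tubular neighbourhood, so they restrict to diffeomorphisms of $S_p$. I would check that their boundary restrictions, read in the identification $\partial\nu(\beta_p)\cong S^1\times S^2$, are isotopic to $\rho_1$ and $\rho_2$, possibly up to composition with $\tau$. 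Any such discrepancy is harmless once $\tau$ itself extends.

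\textbf{Extending the Gluck twist: the main obstacle.} This is the step I expect to be hardest, and it is where the hypothesis that $p$ is odd enters. I will try the ambient map
$$F(\theta,z,w)=(\theta,z,e^{i\theta}w)$$
(or a suitable variant), i.e. a loop of rotations of $S^3$ in the $w$-plane parametrised by the base circle.

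This map fixes the torus $\{w=0\}$ pointwise, and hence fixes $\beta_p$ pointwise. Its derivative on the normal bundle of $\beta_p$ is a loop in $SO(3)$. As one travels once around $\beta_p$, the parameter $\theta$ wraps $p$ times around the base circle. So the loop in $SO(3)$ has class $p\cdot[\text{generator}]\in\pi_1(SO(3))=\Z/2$, which is nontrivial exactly because $p$ is odd. Consequently $F|_{\partial S_p}$ equals $\tau$ up to isotopy.

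The delicate point is to verify this framing bookkeeping carefully. In particular, one must check that the component of the rotation along the torus direction does not contribute an additional twist. If it did, I would compensate by also rotating the $z$-plane, using that loops in $SO(4)$ restrict to $\pi_1(SO(3))$ in a controlled way. Once $\tau$, $\rho_1$ and $\rho_2$ all extend, the reduction in the first step finishes the proof.
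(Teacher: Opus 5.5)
The paper does not prove this statement. It quotes it as Theorem A of Bais--Torres, and the only hint of method in the text is handle-theoretic: the Kirby diagram of $S_p$ in Figure 1, and the remark that Pao's diffeomorphism $L_p\cong L_p'$ can be recovered by handle slides. Your argument is a correct, self-contained alternative. Gluck's theorem reduces everything to three generators, and you realize each generator as the boundary restriction of an explicit diffeomorphism of $S^1\times S^3$ that preserves $\beta_p$. Your route makes the parity hypothesis transparent: the normal derivative of $F$ along $\beta_p$ is the loop $t\mapsto R_{pt}$, whose class in $\pi_1(SO(3))$ is $p \bmod 2$. It also yields Pao's $L_p\cong L_p'$ for odd $p$ as a by-product. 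The handlebody route instead works directly in the diagram the paper uses elsewhere.

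A few points need tidying.

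\textbf{Circular opening.} Your first sentence invokes the very statement you are proving. Drop it; the reduction uses only Gluck's theorem and the collar argument.

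\textbf{The step you flag as delicate is immediate.} Near $\beta_p$, use coordinates $t=\arg z$, $s=\theta-pt$, and $w$. The set $\nu=\{s^2+|w|^2\le\epsilon^2\}$ is a tubular neighbourhood of $\beta_p$ that is invariant under $F$ and under both reflections. This needs saying, because $F$ is not an isometry of the product metric. In these coordinates $F(t,s,w)=(t,s,e^{i(pt+s)}w)$, so $F$ is the identity in the torus directions and no extra twist arises there. Replacing $s$ by $\lambda s$ in the exponent, for $\lambda\in[0,1]$, isotopes $F|_{\partial\nu}$ to $(t,x)\mapsto(t,R_{pt}x)$. For $p$ odd this is $\tau$, so no compensating rotation of the $z$-plane is needed.

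\textbf{The first reflection gives $\rho_1\rho_2$, not $\rho_1$.} In these coordinates your map $(\theta,z,w)\mapsto(-\theta,\bar z,w)$ reads $(t,s,w)\mapsto(-t,-s,w)$. Its boundary restriction is therefore $\rho_1\rho_2$, not ``$\rho_1$ up to $\tau$''. This is harmless because $\rho_2$ extends. Alternatively, $(\theta,z,w)\mapsto(-\theta,\bar z,\bar w)$ gives $(t,s,w)\mapsto(-t,-s,\bar w)$, whose fibre map is a constant rotation, so it yields $\rho_1$ directly.

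\textbf{Choice of identification.} The mapping classes that extend form a subgroup of the abelian group $\pi_0\mathrm{Diff}(\partial S_p)\cong(\Z/2\Z)^3$. Since this subgroup contains a generating set, it is everything, and the choice of identification $\partial S_p\cong S^1\times S^2$ is irrelevant.
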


A consequence of Theorem \ref{Theorem Extension}, quite useful for our purposes, allows us to pin down the homeomorphism class of the four-manifolds (\ref{Manifolds Produced}) under some mild conditions.

\begin{theorem}[Bais-Torres {\cite[Theorem B]{bais}}]\label{homeo}
    Let $M_1$ and $M_2$ be topological four-manifolds such that there are locally flat simple loops \[\{\alpha_{M_i}\subset M_i: i = 1, 2\}\] for which there is a homeomorphism
\begin{equation*}
M_1\setminus \nu(\alpha_{M_1})\rightarrow M_2\setminus \nu(\alpha_{M_2}).
\end{equation*}
If $p > 0$ is an odd integer, then the four-manifolds $M_1(p)$ and $M_2(p)$ constructed in (\ref{Manifolds Produced}) are homeomorphic.
\end{theorem}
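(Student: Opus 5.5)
The plan is to glue the given homeomorphism of the loop complements to a self-homeomorphism of the building block $S_p$, with Theorem \ref{Theorem Extension} supplying the extension. Write $M_i(p) = M_i^0\cup_{\varphi_i} S_p$, where $M_i^0 = M_i\setminus\nu(\alpha_{M_i})$ and $\varphi_i\colon \partial S_p = S^1\times S^2\rightarrow \partial M_i^0$ is the gluing homeomorphism. Because $\alpha_{M_i}$ is locally flat, it has a tubular neighborhood homeomorphic to $S^1\times D^3$ (in dimension four the normal bundle of a loop is trivial). So $\partial M_i^0\cong S^1\times S^2$ and the construction (\ref{Manifolds Produced}) makes sense in the topological category.

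\textbf{Step 1: reduce to an extension problem.} Let $F\colon M_1^0\rightarrow M_2^0$ be the given homeomorphism. It restricts to a homeomorphism $f\colon\partial M_1^0\rightarrow\partial M_2^0$. Set
\[
h := \varphi_2^{-1}\circ f\circ\varphi_1\colon S^1\times S^2\rightarrow S^1\times S^2 .
\]
Suppose there is a homeomorphism $H\colon S_p\rightarrow S_p$ with $H|_{\partial}=h$. Then $\varphi_2\circ H = f\circ\varphi_1$ on $\partial S_p$, so $F\cup H$ is a well-defined homeomorphism $M_1(p)\rightarrow M_2(p)$. The whole proof therefore reduces to extending $h$ over $S_p$.

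\textbf{Step 2: replace $h$ by a diffeomorphism.} Theorem \ref{Theorem Extension} applies only to diffeomorphisms of $S^1\times S^2$, whereas $h$ is merely a homeomorphism. In dimension three every homeomorphism is isotopic to a diffeomorphism, by Moise's theorem together with the fact that homeomorphism and diffeomorphism groups of $3$-manifolds have the same isotopy classes. So there are a diffeomorphism $h'$ and an isotopy $h_t$ from $h'$ to $h$.

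\textbf{Step 3: extend and absorb the isotopy.} Choose a collar $c\colon\partial S_p\times[0,1]\hookrightarrow S_p$ with $c(x,0)=x$. By Theorem \ref{Theorem Extension}, since $p$ is odd there is a diffeomorphism $H'_p\colon S_p\rightarrow S_p$ with $H'_p|_{\partial}=h'$. We may first isotope $H'_p$ so that it is a product $h'\times\mathrm{id}$ on the collar. Then define $H$ to agree with $H'_p$ outside the collar and set
\[
H(c(x,s)) = c\big(h_{1-s}(x),s\big)
\]
on the collar, reparametrizing the isotopy so that it is constant near $s=1$. This gives a homeomorphism $H$ with $H|_{\partial}=h$, and Step 1 completes the argument.

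\textbf{Main obstacle.} The substantive input is Theorem \ref{Theorem Extension}, where oddness of $p$ is used; I am taking it as given. Within this proof the only delicate point is Step 2, the passage from homeomorphisms to diffeomorphisms of $S^1\times S^2$. I would justify it via the classical smoothing results in dimension three. An alternative is to quote that the mapping class group of $S^1\times S^2$ is $\Z/2\times\Z/2\times\Z/2$, generated by explicit diffeomorphisms, and that this group is the same in the topological and smooth categories.
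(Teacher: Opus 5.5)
Your proposal is correct and follows the route the paper indicates. The paper presents Theorem~B as a consequence of Theorem~A: glue the given homeomorphism of the loop complements to an extension over $S_p$ of the induced boundary map. Your extra steps — isotoping the boundary homeomorphism of $S^1\times S^2$ to a diffeomorphism and absorbing the isotopy in a collar — supply the detail needed to apply the smooth Theorem~A in the topological setting.
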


\begin{remark}\label{Remark Hypothesis} Theorem \ref{homeo} is false if $p > 0$ is an even integer. Indeed, Pao showed in \cite{pao}
that the $\Q$-homology four-spheres with $\pi_1 = \Z/p\Z$
\begin{equation}\label{spin qsphere}
L_{p} = S_{p} \cup_{f} (D^2\times S^2)
\end{equation}and \begin{equation}\label{nonspin qsphere}
L'_{p} = S_{p} \cup_{f'} (D^2\times S^2)
\end{equation}are not homotopy equivalent provided $p > 0$ is an even integer, and $f$ and $f'$ are non-isotopic self-diffeomorphisms of $S^1\times S^2$. 
If $p$ is odd, Pao also showed that the four-manifolds (\ref{spin qsphere}) and (\ref{nonspin qsphere}) are diffeomorphic. A handlebody of $L_3$ is drawn in Figure 1 for the sake of clarity. It is immediate to extend it to any value of $p > 1$ by wrapping the middle 2-handle $p$ times around the 1-handle and obtain handlebodies of $L_p$ and $L_p'$ according to the parity of the framing $l$ of the 2-handle that links the 0-framed 2-handle once; cf. \cite[Figure 5.46]{gompf19994}.
\end{remark}

\begin{remark} Gluck twists and the result of Pao. It follows from (\ref{spin qsphere}) and (\ref{nonspin qsphere} that $L_p'$ is obtained by performing a Gluck twist to $L_p$ \cite[p. 156]{gompf19994}, \cite[Definition 2.1]{akbulutyasui}. This is visible in the handlebody of Figure 1 by chosing the framing $l$ of the 2-handle $l\in \{-1, 0\}$ \cite[Figure 1]{akbulutyasui}. Another proof of Pao's result that this Gluck twist does not alter the diffeomorphism class if $p>1$ is odd is obtained by introducing a cancelling handle pair of a 1-handle and a 1-framed 2-handle \cite[Figure 5.12]{gompf19994} and performing a series of handleslides; cf. \cite[Figures 5.10, 5.11, 5.35, 5.39, 5.43 and 12.19]{gompf19994}. 

\end{remark}

\begin{remark}\label{Remark Handlebody}A handlebody of $S_p$. Remove the 3- and 4-handles, along with the Hopf link formed by the $l$-framed and $0$-framed 2-handles from Figure 1 to obtain a handlebody of $S_3$. A handlebody for any $p > 0$ is obtained by wrapping the middle 2-handle $p$-times around the horizontal dotted circle.
\end{remark}

\begin{figure}[ht]\label{Figure 1}
	\includegraphics[scale=0.5]{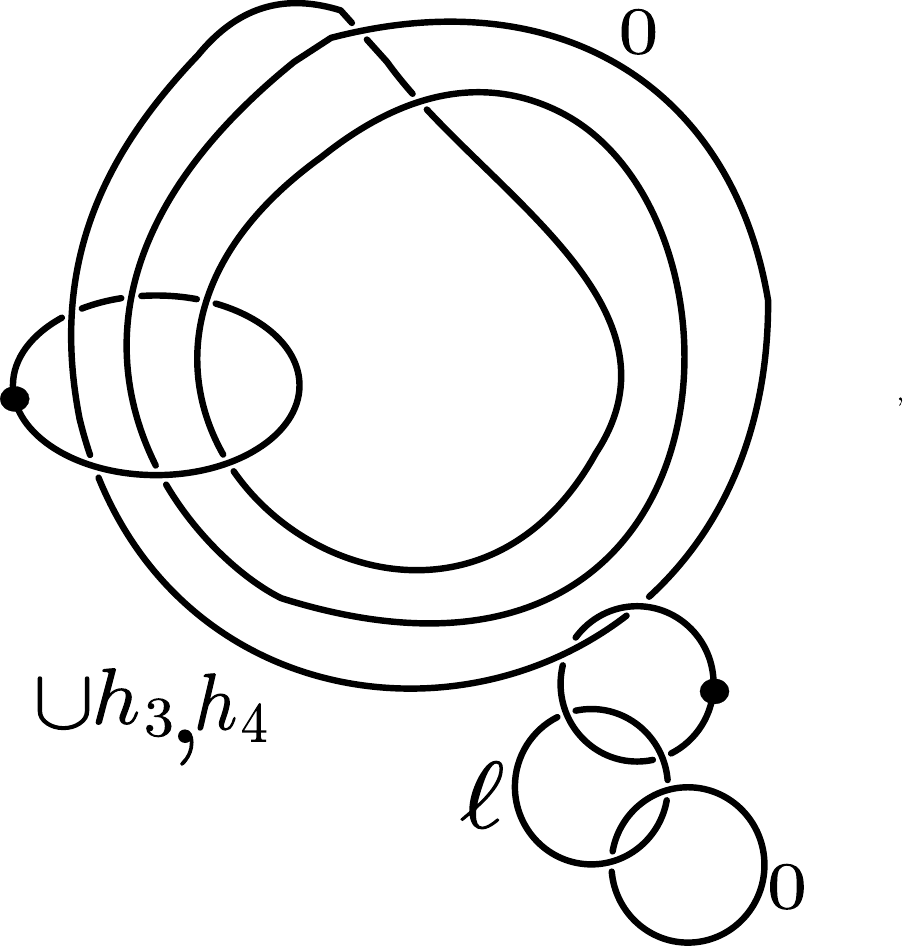}
	\caption{The diagram of the rational homology
four-sphere $L_p$. The middle curve wraps around $p$ times}
	\label{fig:KirbyDiagram}

\end{figure}

\subsection{Construction of the four-manifolds of Theorem \ref{mainthm}}\label{Section Construction Examples}To build our examples, we plug in the four-manifolds of Theorem \ref{bb} into the mechanism described in Section \ref{Section Construction Mechanism}. Fix a knot $K_m\subset \mathcal{K}$ (see (\ref{Family Knots})) and consider a simple framed loop $\alpha_{Y_{K_m}}\subset Y_{K_m}$ such that its homotopy class $[\alpha_{Y_{K_m}}]\in\pi_1(Y_{K_m}) = \Z/2\Z\times \Z/2\Z$ is a generator of the group, and remove its tubular neighborhood. This yields a compact four-manifold $Y_{K_m}\setminus\nu(\alpha_{Y_{K_m}})$ with boundary $\partial (Y_{K_m}\setminus\nu(\alpha_{Y_{K_m}})) = S^1\times S^2$, which will be the four-manifolds in the first step of the cut-and-paste procedure of Section \ref{Section Construction Mechanism}. 

Carrying the second and third steps yields a four-manifold\begin{equation}\label{New Manifolds}X_{K_m}(p):= (Y_{K_m}\setminus \nu(\alpha_{Y_{K_m}}))\cup S_p\end{equation}for every odd $p > 0$ and $S_p$ the compact four-manifold (\ref{Building Block Sp}). As observed in \cite{bais}, the assemblage (\ref{New Manifolds}) carries along the assemblage of an associated double cover of $X_{K_m}(p)$; cf. \cite[Lemma 8]{bais}. The necessary info is encoded in the next diagram.

\begin{proposition}\label{Proposition Coverings}There is a diagram\[\begin{tikzcd}
	& {K3_{K}\# L_p \# L_p} \\
	{E_{K}\# L_p} \\
	& {X_{K}(p)}.
	\arrow["{2:1}"', from=1-2, to=2-1]
	\arrow["{4:1}", from=1-2, to=3-2]
	\arrow["{2:1}"', from=2-1, to=3-2]
\end{tikzcd}\]of covers of the four-manifold assembled in (\ref{New Manifolds}).\end{proposition}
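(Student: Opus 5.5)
Our plan is to build the covers of $X_K(p)$ by lifting the cut-and-paste decomposition (\ref{New Manifolds}) along the coverings of Theorem \ref{bb} and Lemma \ref{Lemma Implicit}. Write $\alpha=\alpha_{Y_K}$, and let $\pi_1(Y_K)=\Z/2\Z\times\Z/2\Z$ be generated by $[\alpha]$ and a second order two element $g$. First we compute $\pi_1(X_K(p))$ by Seifert--van Kampen. The inclusion $\partial=S^1\times S^2\hookrightarrow Y_K\setminus\nu(\alpha)$ induces an isomorphism on $\pi_1$ of the complement (removing a loop from a $4$-manifold does not change $\pi_1$). The inclusion $\partial S_p\hookrightarrow S_p$ sends the meridian generator to $[\beta_p]=t^p$ in $\pi_1(S_p)=\langle t\rangle\cong\Z$, since $\beta_p$ has codimension three. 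Hence
\begin{equation*}
\pi_1(X_K(p))=\langle \alpha, g, t \mid \alpha^2, g^2, [\alpha,g], \alpha=t^p\rangle,
\end{equation*}
which is the amalgam $\langle t\rangle *_{t^p=\alpha}(\Z/2\Z\times\Z/2\Z)$. We then take the epimorphisms $\varphi_2:\pi_1(X_K(p))\to \Z/2\Z$ and $\varphi_4:\pi_1(X_K(p))\to\Z/2\Z\times\Z/2\Z$ given by $t\mapsto [\alpha]$ and $g\mapsto g$. These are well defined because $p$ is odd, so $t^p\mapsto[\alpha]$. For the double cover we use the composition of $\varphi_4$ with the quotient killing $[\alpha]$ (so $t\mapsto 0$, $g\mapsto 1$), chosen so that it restricts on $Y_K\setminus\nu(\alpha)$ to the class of the double cover $E_K\to Y_K$ of Lemma \ref{Lemma Implicit}, which must be the one in whose kernel $[\alpha]$ lies.

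Next we describe the preimages piece by piece. Over $Y_K\setminus\nu(\alpha)$, the cover restricts to $E_K$ minus the preimage of $\alpha$. Since $[\alpha]$ lies in the kernel, $\alpha$ lifts to two disjoint loops $\tilde\alpha_1,\tilde\alpha_2\subset E_K$, each representing a generator of $\pi_1(E_K)=\Z/2\Z$. Over $S_p$, the restricted homomorphism $t\mapsto 0$ is trivial, so the preimage is two copies of $S_p$. Gluing gives
\begin{equation*}
(E_K\setminus(\nu(\tilde\alpha_1)\sqcup\nu(\tilde\alpha_2)))\cup S_p\cup S_p,
\end{equation*}
that is, the construction (\ref{Manifolds Produced}) applied twice to $E_K$. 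For the four-fold cover we argue analogously. Over $Y_K\setminus\nu(\alpha)$, $\varphi_4$ is the universal cover, giving $K3_K$ minus four lifts of $\alpha$. Over $S_p$, $t$ maps to an element of order two, so the preimage is two copies of the connected double cover $\widetilde{S_p}$ of $S_p$. That double cover is $(S^1\times S^3)\setminus\nu(\tilde\beta)$, where $\tilde\beta$ is a lift of $\beta_p$; it is a pair of simple loops representing $p$ times the generator in the doubled circle, hence $\widetilde{S_p}=S^1\times S^3$ minus two loops. Compatibility of the two coverings gives the triangle.

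The key identification, and the step we expect to be the main obstacle, is showing that regluing $S_p$ along a null-homotopic-in-$\pi_1$ loop produces a connected sum with $L_p$. More precisely, we need
\begin{equation*}
(M\setminus(\nu(\gamma_1)\sqcup\nu(\gamma_2)))\cup S_p\cup S_p\cong M\# L_p
\end{equation*}
when $\gamma_1,\gamma_2$ are parallel lifts, and similarly for the $K3$ case with two $L_p$ summands. Our approach is to isotope one lift, say $\tilde\alpha_2$, so that it becomes homotopic to $\tilde\alpha_1^{-1}$ composed with a trivial loop. In dimension four, homotopy implies isotopy for loops, so we can slide $\tilde\alpha_2$ into a small neighbourhood of $\tilde\alpha_1$ and realize the pair as the boundary of an embedded annulus, or pull them into a $4$-ball. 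Using Theorem \ref{Theorem Extension} to discard framing ambiguities, the region $S_p\cup_{S^1\times S^2}(\text{neighbourhood of the annulus})\cup S_p$ should then be identified with $L_p$ minus a ball, attached along the lift where it yields back $M$. This is essentially the argument of \cite[Lemma 8]{bais}, which we will follow. In particular, this requires checking that surgering a loop that bounds in the cover is the same as summing with $S_p\cup_f(D^2\times S^2)=L_p$. Once this is done, $E_K\# L_p$ and $K3_K\#L_p\#L_p$ follow. The number of $L_p$ summands is consistent with the count $4/2$ of lifts and with the Euler characteristic check $\chi=2\chi(X_K(p))$ and $4\chi(X_K(p))$.
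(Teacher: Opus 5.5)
There is a genuine gap: you chose the wrong double cover, and the ``key identification'' you plan to prove is false.

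With $\varphi_2$ killing $[\alpha]$ ($t\mapsto 0$, $g\mapsto 1$), the two lifts $\tilde\alpha_1,\tilde\alpha_2\subset E_K$ each represent the generator of $\pi_1(E_K)=\Z/2\Z$, as you observe yourself. So they are not null-homotopic and cannot be pushed into a $4$-ball, and your cover is the Bais--Torres construction performed twice on $E_K$. It is not $E_K\# L_p$. Reidemeister--Schreier applied to $\langle g,t\mid g^2, t^{2p}, [t^p,g]\rangle$ gives $\ker\varphi_2\cong\langle t_0,t_1\mid t_0^{2p},t_1^{2p},t_0^p=t_1^p\rangle$ with $t_1=gtg^{-1}$. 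Its abelianization $\Z/p\Z\oplus\Z/2p\Z$ has order $2p^2$, whereas $H_1(E_K\#L_p)\cong\Z/2\Z\oplus\Z/p\Z$. The identity $(M\setminus(\nu(\gamma_1)\sqcup\nu(\gamma_2)))\cup S_p\cup S_p\cong M\#L_p$ fails in every case. For essential $\gamma_i$, as here, the computation above rules it out. For null-homotopic $\gamma_i$, each copy of $S_p$ contributes its own $L_p$ summand, so you get two. Your Euler characteristic check cannot detect any of this, since $\chi(L_p)=2$.

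The correct choice is an index-two subgroup whose restriction to $\pi_1(Y_K)$ does \emph{not} contain $[\alpha]$, e.g.\ $t\mapsto 1$, $g\mapsto 0$, which forces $\alpha=t^p\mapsto 1$ since $p$ is odd. This is the cover the paper's geometric argument actually uses. (The paper's written formula ``$\alpha\mapsto 0$, $\beta\mapsto 1$'' is incompatible with $\beta^p\alpha=1$; the argument that follows it is the one for $\alpha,\beta\mapsto 1$.)

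With this choice, $\alpha$ has a single preimage circle in $E_K$, representing $[\alpha]^2=1$. It is therefore null-homotopic and isotopic to an unknotted circle in a ball. Also, $S_p$ has connected preimage, which is again $S_p$: since $\beta_p$ represents an odd class, its preimage in the double cover of $S^1\times S^3$ is one circle representing $p$ times the generator. Hence the cover is $(E_K\setminus\nu(\tilde\alpha))\cup S_p=E_K\#((D^2\times S^2)\cup S_p)\cong E_K\#L_p$, by Pao's result and Theorem~\ref{Theorem Extension}.

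The same parity fact corrects your four-fold cover. Since $[\alpha]$ has order two in the deck group, its preimage in $K3_K$ consists of two circles (not four), both null-homotopic. The two preimage components over $S_p$ are copies of $S_p$, not $S^1\times S^3$ minus two loops. This gives $K3_K\#L_p\#L_p$, and $\ker\varphi_4\subset\ker\varphi_2$ yields the triangle.
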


\begin{proof}Let $\widetilde{X_{K_m}(p)}\rightarrow X_{K_m}(p)$ be the double cover that corresponds to the subgroup $\langle \alpha,\beta^2\rangle$ in $\pi_1(X_{K_m}(p))$, the kernel of the group homomorphism $\varphi: \pi_1 (X(p))\to \Z/2\Z$ that sends $\alpha$ to $0$ and $\beta$ to $1$ according to the group presentation of \ref{presentazione}. Let $\widetilde{\alpha_{Y_{K_m}}}\subset E_{K_m}$ be the preimage of the loop $\alpha_{Y_{K_m}}\subset Y_{K_m}$ used in the assemblage (\ref{New Manifolds}) of $X_{K_m}(p)$. The former is a nullhomotopic simple loop in $E_{K_m}$. Since any two such loops in a four-manifold are isotopic \cite[Example 4.1.3]{gompf19994}, we have that\begin{equation}\label{Left Double Cover}\widetilde{X_{K_m}(p)} = E_{K_m}\setminus \nu(\widetilde{\alpha_{Y_{K_m}}})\cup S_p = E_{K_m}\#((D^2\times S^2)\cup S_p).\end{equation} As discussed in Remark \ref{Remark Hypothesis}, a result of Pao \cite{pao} implies that the piece $(D^2\times S^2)\cup S_p$ on the right-hand side of the connected sum in (\ref{Left Double Cover}) is diffeomorphic to the $\Q$-homology four-sphere $L_p$ since $p > 1$ is assumed to be an odd integer. It follows that $\widetilde{X_{K_m}(p)}$ is diffeomorphic to the connected sum $E_{K_m}\# L_p$. This establishes the double cover on the left-hand side of the diagram.  

The double cover $K3_{K_m}\# L_p\#L_p\rightarrow E_{K_m}\# L_p$ is constructed from the cover on the left-hand side of the diagram along with the double cover $K3_{K_m}\rightarrow E_{K_m}$ that was discussed in Section \ref{Section Raw Materials}. This, in turn, implies that there is a four-to-one cover $K3_{K_m}\#L_p\#L_p\rightarrow X_{K_m}(p)$. This completes the diagram.\end{proof}

\subsection{Topological invariants and homeomorphism class}\label{top}We begin this section by computing the topological invariants of (\ref{New Manifolds}) and then determine its  homeomorphism class as in the statement of Theorem \ref{mainthm}. 

    \begin{proposition}\label{presentazione}
    Let $p>1$ be an odd integer. The four-manifold $X_{K_m}(p)$ constructed in (\ref{New Manifolds}) satisfies the following properties. 
    \begin{enumerate}
        \item Its fundamental group is infinite and its abelianization is given by $\text{Ab}(\pi_1(X_{K_m}(p)))\cong \Z/2p\Z\times\Z/2\Z$.
        \item Its Euler characteristic and its signature are\begin{center}$\chi(X_{K_m}(p))=6$ and $\sigma(X_{K_m}(p)) = -4$,\end{center} and its second Stiefel-Whitney class is $w_2(X_{K_m}(p))\neq0.$
        \item The intersection form over $\Z$ of $X_{K_m}(p)$ is negative definite
        \end{enumerate}
\end{proposition}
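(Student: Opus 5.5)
Throughout, write $X = X_{K_m}(p)$, $Y = Y_{K_m}$ and $\alpha = \alpha_{Y_{K_m}}$.

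\emph{Fundamental group.} I will use Seifert--van Kampen on the decomposition (\ref{New Manifolds}). Removing a loop from a four-manifold does not change $\pi_1$, since the loop has codimension three. So
\[\pi_1(Y\setminus\nu(\alpha)) = \Z/2\Z\times\Z/2\Z = \langle a,b \mid a^2, b^2, [a,b]\rangle,\]
where $a=[\alpha]$. Likewise $\pi_1(S_p)=\pi_1(S^1\times S^3)=\Z=\langle \beta\rangle$.

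The boundary $S^1\times S^2$ has $\pi_1 = \Z$. Its generator (the meridian circle factor) maps to $a$ on one side and to $\beta^p$ on the other. Hence
\[\pi_1(X) = \langle a,b,\beta \mid a^2,\ b^2,\ [a,b],\ a=\beta^p\rangle = \langle b,\beta \mid \beta^{2p},\ b^2,\ [\beta^p,b]\rangle.\]
Abelianizing gives $\Z/2p\Z\times\Z/2\Z$ directly.

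For infiniteness, I will map onto a known infinite group. Since $\beta^p$ is central, the quotient by $\beta^p$ is $\Z/p\Z * \Z/2\Z$, a free product of two nontrivial finite groups that are not both of order two (because $p>1$ is odd, so $p\geq 3$). Such a group is infinite and in fact contains free subgroups. This settles (1). The equality is an amalgam $\Z/2p\Z *_{\Z/2\Z} (\Z/2\Z\times\Z/2\Z)$, which could also be cited.

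\emph{Euler characteristic and signature.} I will use additivity. Since $\chi(S^1\times S^2)=0$,
\[\chi(X) = \chi(Y)-\chi(S^1\times D^3)+\chi(S_p) = 6 - 0 + 0 = 6,\]
because $\chi(S_p)=\chi(S^1\times S^3)-\chi(S^1\times D^3)=0$.

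For the signature, Novikov additivity gives
\[\sigma(X)=\sigma(Y\setminus\nu(\alpha))+\sigma(S_p).\]
The first term equals $\sigma(Y)=-4$ by Theorem \ref{bb}(2). The second is $0$, since $S_p\cup (D^2\times S^2)=L_p$ is a $\Q$-homology sphere. Alternatively, the four-fold cover of Proposition \ref{Proposition Coverings} has $\sigma(K3_{K_m}\#L_p\#L_p)=-16$, and multiplicativity then gives $-4$.

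\emph{Second Stiefel--Whitney class.} Here I would use the double cover $E_{K_m}\#L_p\to X$. Since $w_2$ pulls back under covers, it suffices to show $w_2(E_{K_m}\#L_p)\neq 0$, and this holds because $E_{K_m}$ is homeomorphic to the Enriques surface, which is non-spin. A cleaner alternative is to note that $w_2$ is determined on the complement $Y\setminus \nu(\alpha)$, where it restricts from $w_2(Y)$. The latter is nonzero since $Y$ is double covered by a non-spin manifold, and $H^2(Y)\to H^2(Y\setminus\nu(\alpha))$ is an isomorphism.

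\emph{Definiteness.} I will run Mayer--Vietoris with $\Q$ coefficients. Since $H_2(S_p;\Q)=0$ and $b_1(X)=0$, we get $b_2(X)=4$ from $\chi=6$ (using $b_3=b_1=0$). Together with $\sigma=-4$, this gives $b_2^+=0$, so the form on $H_2(X)/\mathrm{Tors}$ is negative definite.

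\emph{Main obstacle.} The delicate step is identifying the gluing on $\pi_1$ carefully, namely that the boundary circle maps to $a$ and to $\beta^p$. That identification is what the whole $\pi_1$ argument rests on. The infiniteness argument via $\Z/p\Z*\Z/2\Z$ then needs $p\geq3$, which the hypothesis $p>1$ odd provides.
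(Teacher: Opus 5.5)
Your proof is correct. For the fundamental group, $\chi$, $\sigma$ and definiteness it is the paper's argument:
\begin{itemize}
\item the same van Kampen presentation $\langle b,\beta\mid \beta^{2p}, b^2, [\beta^p,b]\rangle$;
\item the same infinite quotient $\Z/p\Z*\Z/2\Z$ by the central element $\beta^p$;
\item additivity and Novikov additivity giving $\chi=6$ and $\sigma=-4$, and then $b_2=4=|\sigma|$.
\end{itemize}

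The genuine difference is how you prove $w_2(X_{K_m}(p))\neq 0$. The paper applies Rokhlin's theorem directly to $X_{K_m}(p)$: a closed smooth spin four-manifold has signature divisible by $16$, and $\sigma=-4$ is not. You instead pull $w_2$ back to the double cover $E_{K_m}\#L_p$ of Proposition \ref{Proposition Coverings}, or restrict it to $Y\setminus\nu(\alpha)$, and invoke non-spinness of the Enriques surface. This works, but it is longer and needs the covering (or the double cover $E_{K_m}\to Y_{K_m}$) as extra input. It also ultimately rests on the same fact, since the standard reason the Enriques surface is non-spin is Rokhlin's theorem with $\sigma=-8\not\equiv 0 \bmod 16$.

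Two small points. First, in your second variant, $H^2(Y)\to H^2(Y\setminus\nu(\alpha))$ is an isomorphism only with $\Z/2\Z$ coefficients, using $[\alpha]\neq 0$ in $H_1(Y;\Z/2\Z)$. Integrally it is not, since $b_2(Y\setminus\nu(\alpha))=5$. You only need injectivity, which holds for any coefficients because $H^2(\nu(\alpha),\partial\nu(\alpha))=0$. Second, the circle factor of $\partial\nu(\alpha)\cong S^1\times S^2$ is a parallel copy of $\alpha$, not a meridian; the meridian is the $S^2$ factor. You use it correctly, so this is only a naming slip.
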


\begin{proof} We work with the group presentation\begin{equation}\label{Group Presentation}\Z/2\Z\times\Z/2\Z=\langle \alpha,\gamma\;|\;\alpha^2=\gamma^2=1=(\alpha\gamma)^2\rangle\end{equation}for the Klein four-group.  To prove the first item, apply the Seifert-van Kampen theorem to (\ref{New Manifolds}) and obtain the group presentation\begin{align*}\label{presentazione}
        \pi_1(X_{K_m}(p))&=\langle \alpha,\gamma,\beta \;|\; \alpha^2=\gamma^2=1=(\alpha\gamma)^2;\;\beta^p\alpha=1\rangle =\\
        &=\langle\gamma,\beta\;|\;\gamma^2=\beta^{2p}=1;\;\beta^p\gamma=\gamma\beta^p\rangle.
    \end{align*}
    Here, the element $\beta^p$ is in the center of $\pi_1(X_{K_m}(p))$. Let $N=\langle \beta^p\rangle$ be the normal subgroup generated by $\beta^p.$ We have that\begin{equation*}\pi_1(X_{K_m}(p))/N=\Z/2\Z*\Z/p\Z.\end{equation*}Since the fundamental group $\pi_1(X_{K_m}(p))$ has an infinite quotient, it is itself an infinite group. We observe that when considering the abelianization $\text{Ab}(\pi_1(X_{K_m}(p)))$, the presentation of the group reduces to $$\langle\gamma,\beta\;|\;\gamma^2=\beta^{2p}=1;\;\beta\gamma=\gamma\beta\rangle=\Z/2p\Z\times\Z/2\Z.$$The values in the second item of the proposition are obtained from straight-forward (co)homological computations, Novikov additivity and Theorem \ref{bb}, and one sees that\begin{center}$\chi(X_{K_m}(p)) = \chi(Y_{K_m}) = 6$, $b_2(X_{K_m}(p)) = b_2(Y_{K_m}) = 4$ and $\sigma(X_{K_m}(p)) = \sigma(Y_{K_m}) = - 4$.\end{center}Rokhlin’s theorem implies $w_2(X_{K_m}(p))\neq 0$. The last item follows from the equality $|\sigma(X_{K_m}(p))| = b_2(X_{K_m}(p))$.
\end{proof}

    \begin{remark}\label{Remark Growth}
        The presentation of $\pi_1(X_m(p))$ that is described in Proposition \ref{presentazione} allows us to conclude that these fundamental groups grow exponentially. Indeed, the subgroup $\langle\beta^p\rangle<\pi_1(X_m(p))$ is normal and finite. Hence, $\pi_1(X_m(p))$ is quasi-isometric to $\pi_1(X_m(p))/\langle\beta^p\rangle\cong\Z/2\Z*\Z/p\Z$ \cite[Corollary 8.47]{dructu2018geometric}. The latter has exponential growth, as it possesses a subgroup isomorphic to the free group on two generators $F_2$ provided $p>2$.
    \end{remark}

A straightforward application of Theorem \ref{homeo} to our setting allows us to pin the homeomorphism class of the four-manifolds under consideration.
\begin{proposition}\label{Proposition Homeomorphism}The set $\{X_{K_m}(p): K_m\in \mathcal{K}\}$ consists of pairwise homeomorphic four-manifolds for $p > 1$ an odd integer.
\end{proposition}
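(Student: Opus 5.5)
We will deduce the proposition from Theorem \ref{homeo}, applied with $M_1 = Y_{K_m}$ and $M_2 = Y_{K_{m'}}$ for two knots $K_m, K_{m'}\in\mathcal{K}$. This requires a homeomorphism
\[
Y_{K_m}\setminus \nu(\alpha_{Y_{K_m}})\rightarrow Y_{K_{m'}}\setminus \nu(\alpha_{Y_{K_{m'}}}).
\]
Once we have it, Theorem \ref{homeo} gives that $X_{K_m}(p)$ and $X_{K_{m'}}(p)$ are homeomorphic for odd $p$. So the work is to produce this homeomorphism of loop complements.

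By Theorem \ref{bb}, there is a homeomorphism $f: Y_{K_m}\to Y_{K_{m'}}$. It induces an automorphism of $\pi_1 = \Z/2\Z\times\Z/2\Z$ and sends $[\alpha_{Y_{K_m}}]$ to some order-two element $f_*[\alpha_{Y_{K_m}}]$. We must arrange that this equals $[\alpha_{Y_{K_{m'}}}]$.

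\textbf{Step 1: fix the class of the image loop.} The loop $\alpha_{Y_{K_{m'}}}$ was chosen only up to its being a generator, so we first exploit this freedom. We choose the loops compatibly with the equivariant construction of Harris--Naylor--Park, so that corresponding generators go to corresponding generators. The smooth manifolds $Y_{K_m}$ share the complement $(K3\setminus\sqcup\nu(T_i))/(\Z/2\Z\times\Z/2\Z)$, and differ only in the knot-surgery pieces. Concretely, we pick $\alpha$ inside this common complement, representing a fixed generator, for every $m$. Alternatively, we could compose $f$ with a self-homeomorphism of $Y_{K_{m'}}$ that realizes a suitable automorphism of the Klein four-group. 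However, realizability of such automorphisms is not immediate, so we prefer the first route.

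\textbf{Step 2: make the image loop agree with $\alpha_{Y_{K_{m'}}}$.} Now $f(\alpha_{Y_{K_m}})$ and $\alpha_{Y_{K_{m'}}}$ are homotopic locally flat loops in a topological four-manifold. In dimension four, homotopy implies isotopy for loops, by general position in the topological category (cf.\ \cite[Example 4.1.3]{gompf19994} in the smooth case). Applying ambient isotopy extension, we modify $f$ so that $f(\nu(\alpha_{Y_{K_m}})) = \nu(\alpha_{Y_{K_{m'}}})$. Restricting $f$ then yields the desired homeomorphism of complements. Framings do not matter here, since Theorem \ref{homeo} only asks for a homeomorphism of the complements, and Theorem \ref{Theorem Extension} absorbs the boundary identification.

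\textbf{Main obstacle.} The main obstacle is Step 1: ensuring that the homeomorphism of Theorem \ref{bb} can be chosen to respect the prescribed $\pi_1$-classes of the loops. If the fixed-complement choice is not enough, we would instead invoke the Hambleton--Kreck classification \cite{kreck1993cancellation}. It states that manifolds with finite $\pi_1$ and the same invariants (here $w_2$-type, intersection form, Kirby--Siebenmann invariant) are homeomorphic by a homeomorphism inducing any prescribed isomorphism of fundamental groups that is compatible with these invariants. We would use it to realize the needed automorphism of $\Z/2\Z\times\Z/2\Z$.
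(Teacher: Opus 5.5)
\textbf{There is a genuine gap: Step 1 is the whole difficulty, and you leave it open.} Theorem \ref{bb} gives you only an abstract homeomorphism $f\colon Y_{K_m}\to Y_{K_{m'}}$, with no control over $f_*$ on $\pi_1$.

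Placing $\alpha$ in the common complement $(K3\setminus\sqcup\nu(T_i))/(\Z/2\Z\times\Z/2\Z)$ gives a natural identification of $\pi_1(Y_{K_m})$ with $\pi_1(Y_{K_{m'}})$. But $f$ is not constructed relative to that complement, so nothing says $f_*$ induces this identification. Your claim that ``corresponding generators go to corresponding generators'' is therefore unsupported. Worse, this choice throws away exactly the freedom you said you would exploit.

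The Hambleton--Kreck fallback does not work as stated either. Their theorem is a classification statement: matching invariants imply homeomorphic. It says nothing about which isomorphisms of $\pi_1$ such a homeomorphism induces. To realize a prescribed automorphism of the Klein group you would have to go inside the modified-surgery argument. You would need to check that the automorphism preserves the normal $1$-type, and the relevant bordism class, and then destabilize. The normal $1$-type involves the class $w\in H^2(\Z/2\Z\times\Z/2\Z;\Z/2\Z)$ with $w_2=c^*w$; here $w$ is forced to be $x^2+xy+y^2$, since all three intermediate double covers are smooth of signature $-8$ and hence non-spin. None of this appears in your proposal.

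\textbf{The missing idea is to use the freedom of choice in the other direction, which is what the paper does.} The loop in $Y_{K_{m'}}$ is only required to represent a generator. So take it to be $f(\alpha_{Y_{K_m}})$. Since $f_*$ is an isomorphism, its class is again a generator. Then $f$ restricts to a homeomorphism $Y_{K_m}\setminus\nu(\alpha_{Y_{K_m}})\to Y_{K_{m'}}\setminus\nu(f(\alpha_{Y_{K_m}}))$, and Theorem \ref{homeo} finishes.

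Your Step 2 then becomes exactly the detail needed to make this rigorous, and the paper passes over it. That detail is a topological isotopy replacing the merely locally flat loop $f(\alpha_{Y_{K_m}})$ by a smooth one, with framings absorbed by Theorem \ref{Theorem Extension}. To handle the whole family consistently, fix one base knot and transport its loop to every $Y_{K_m}$ by chosen homeomorphisms.

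There is a trade-off worth noting. The paper's argument only shows that the loops \emph{can be chosen} so that the $X_{K_m}(p)$ are pairwise homeomorphic. Your route, if completed via a realization result for automorphisms of $\pi_1$, would prove more: the homeomorphism type does not depend on which generator is used.
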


\begin{proof} Let $Y_{K_{m_1}}$ and $Y_{K_{m_2}}$ be any two four-manifolds in the infinite family (\ref{Family HNP}) of Theorem \ref{bb} for $K_{m_1}, K_{m_2}\in \mathcal{K}$. Theorem \ref{bb} says that there is a homeomorphism\begin{equation}\label{Homeomorphism 1}h:Y_{K_{m_1}}\rightarrow Y_{K_{m_2}}.\end{equation}The homeomorphism (\ref{Homeomorphism 1}) canonically restricts to a homeomorphism\begin{equation}\label{Homeomorphism 2}h|:Y_{K_{m_1}}\setminus \nu(\alpha_{Y_{K_{m_1}}})\rightarrow Y_{K_{m_2}}\setminus \nu(h(\alpha_{Y_{K_{m_1}}})),\end{equation}where the homotopy classes\begin{center}$[\alpha_{Y_{K_{m_1}}}]\in \pi_1(Y_{K_{m_1}})$ and $[h(\alpha_{Y_{K_{m_1}}})]\in \pi_1(Y_{K_{m_2}})$\end{center}are generators. Proceed as in (\ref{New Manifolds}) and cap off the boundaries of these four-manifolds with $S_p$ to produce\begin{equation}\label{New Manifold 1}X_{K_{m_1}}(p): =Y_{K_{m_1}}\setminus \nu(\alpha_{Y_{K_{m_1}}})\cup S_p\end{equation}and\begin{equation}\label{New Manifold 2}X_{K_{m_2}}(p): = Y_{K_{m_2}}\setminus \nu(h(\alpha_{Y_{K_{m_1}}}))\cup S_p,\end{equation}which are elements of the set $\{X_{K_m}(p): K_m\in \mathcal{K}\}$. The four-manifolds (\ref{New Manifold 1}) and (\ref{New Manifold 2}) are homeomorphic by (\ref{Homeomorphism 2}) and Theorem \ref{homeo}. Since the choice of initial four-manifolds was arbitrary, the proposition follows.\end{proof}

\section{Obstructing diffeomorphisms.}\label{sw}
\noindent We follow the strategy in \cite{bais, baykur2024smooth, harris, levine2023new, stipsicz2024definite, stipsicz2024definite2} to distinguish the smooth structures on our four-manifolds through the smooth structures on their finite degree coverings. Recall that a diffeomorphism $f:M_1\rightarrow M_2$ between two closed oriented four-manifolds lifts to a diffeomorphism $\tilde{f}:\widetilde{M}_1\rightarrow \widetilde{M}_2$, whenever the subgroups of $\pi_1(M_i)$ that identify the coverings correspond under the group homomorphism induced by $f$. 

\begin{lemma}\label{lift}
    Let\begin{equation}\label{Diffeo Down}f:X_{K_m}(p)\rightarrow X_{K_n}(p)\end{equation}be a diffeomorphism with two respective given coverings $K3_{K_m}\#L_p\#L_p$ and $K3_{K_n}\#L_p\#L_p$ associated as in Proposition \ref{Proposition Coverings}. The diffeomorphism (\ref{Diffeo Down}) lifts to a diffeomorphism\begin{equation}\label{Diffeo Up}\tilde{f}: K3_{K_m}\#L_p\#L_p\rightarrow K3_{K_n}\#L_p\#L_p.\end{equation}
\end{lemma}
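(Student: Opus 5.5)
The plan is to show that the index-four subgroup of $\pi_1$ defining the cover $K3_{K_m}\#L_p\#L_p\rightarrow X_{K_m}(p)$ of Proposition \ref{Proposition Coverings} is characteristic. Then any isomorphism $f_*$ induced by (\ref{Diffeo Down}) carries it to the corresponding subgroup of $\pi_1(X_{K_n}(p))$, and the lifting criterion recalled at the start of this section produces $\tilde f$.

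\emph{Step 1: identify the subgroup.} Write $G=\pi_1(X_{K_m}(p))$ with the presentation of Proposition \ref{presentazione}:
\[
G=\langle\gamma,\beta\;|\;\gamma^2=\beta^{2p}=1;\;\beta^p\gamma=\gamma\beta^p\rangle .
\]
The four-to-one cover of Proposition \ref{Proposition Coverings} comes from the Klein four-group cover $K3_{K_m}\rightarrow Y_{K_m}$. It is the pullback of that cover to the complement $Y_{K_m}\setminus\nu(\alpha_{Y_{K_m}})$, followed by gluing in copies of $S_p$.

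Concretely, its subgroup is the kernel $H_m$ of the surjection $\psi:G\rightarrow \Z/2\Z\times\Z/2\Z$ defined as follows. The map $\psi$ sends $\gamma\mapsto\gamma$ and $\alpha\mapsto\alpha$. It sends $\beta\mapsto\alpha^{-1}$, which is consistent with the relation $\beta^p\alpha=1$ because $p$ is odd and $\alpha$ has order two. I will verify this identification by checking that $\psi$ restricted to the image of $\pi_1$ of the complement is the classifying map of $K3_{K_m}\rightarrow Y_{K_m}$. I will also check that on $S_p$ (where $\pi_1(S_p)=\Z$, generated by $\beta$) the cover is the one whose preimages match the two copies of $L_p$ in Proposition \ref{Proposition Coverings}. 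This bookkeeping is the step I expect to require the most care, especially matching the conventions for $\alpha$, $\beta$ and $\varphi$ used in the proof of Proposition \ref{Proposition Coverings}.

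\emph{Step 2: uniqueness.} Any surjection $G\rightarrow \Z/2\Z\times\Z/2\Z$ has an elementary abelian $2$-group as target. Hence it factors through
\[
G/G^2[G,G]=H_1(G;\Z/2\Z)=\text{Ab}(G)\otimes\Z/2\Z .
\]
By Proposition \ref{presentazione}, $\text{Ab}(G)\cong\Z/2p\Z\times\Z/2\Z$, so
\[
\text{Ab}(G)\otimes\Z/2\Z\cong\Z/2\Z\times\Z/2\Z .
\]
A surjection between groups of order four is an isomorphism. Therefore every such surjection has kernel exactly $G^2[G,G]$. Thus $H_m=G^2[G,G]$ is the unique normal subgroup of $G$ with quotient $\Z/2\Z\times\Z/2\Z$. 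In particular it is characteristic, and it is defined intrinsically in any group.

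\emph{Step 3: lift.} Choose basepoints and let $f_*:\pi_1(X_{K_m}(p))\rightarrow\pi_1(X_{K_n}(p))$ be the induced isomorphism. By Step 2,
\[
f_*(G_m^2[G_m,G_m])=G_n^2[G_n,G_n],
\]
that is, $f_*(H_m)=H_n$; changing the basepoint only conjugates, and these subgroups are normal. The lifting criterion now gives a lift $\tilde f: K3_{K_m}\#L_p\#L_p\rightarrow K3_{K_n}\#L_p\#L_p$ of $f$ between the regular covers. Applying the same argument to $f^{-1}$ gives a lift that is inverse to $\tilde f$ up to a deck transformation. Hence $\tilde f$ is a diffeomorphism, and it is orientation-preserving whenever $f$ is, since the covering projections are local orientation-preserving diffeomorphisms.
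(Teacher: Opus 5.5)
Your proof is correct, and it follows the same mechanism the paper relies on: the lifting criterion recalled just before the lemma. The paper, however, states the lemma without checking that $f_*$ carries one covering subgroup to the other, and your Step 2 supplies exactly that verification. Since $\text{Ab}(G)\otimes\Z/2\Z$ has order four, a normal subgroup with quotient $\Z/2\Z\times\Z/2\Z$ is forced to be $G^2[G,G]$. It is therefore characteristic, so $f_*(H_m)=H_n$ no matter how $f_*$ acts.

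In Step 1, do not try to match the paper's conventions literally. In the proof of Proposition \ref{Proposition Coverings}, $\varphi$ is said to send $\alpha\mapsto 0$ and $\beta\mapsto 1$. This is incompatible with $\beta^p\alpha=1$ for odd $p$. Moreover, $\langle\alpha,\beta^2\rangle=\langle\beta\rangle$ is finite, hence not of index two in the infinite group $G$. Every homomorphism $G\to\Z/2\Z$ satisfies $\varphi(\alpha)=\varphi(\beta)$. The double cover that actually yields $E_{K_m}\#L_p$ is the one in which $\alpha$ has a single nullhomotopic preimage, so it has $\varphi(\alpha)=\varphi(\beta)=1$, and this $\varphi$ factors through your $\psi$.

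You can also skip the bookkeeping entirely. The group $\ker\varphi\cong\pi_1(E_{K_m}\#L_p)\cong\Z/2\Z*\Z/p\Z$ has a unique index-two subgroup. That subgroup is therefore characteristic in the normal subgroup $\ker\varphi$, hence normal in $G$. So $H_m$ is normal of index four, and $G/H_m$ is abelian of order four. It is a quotient of $\Z/2p\Z\times\Z/2\Z$, which has no $\Z/4\Z$ quotient, so $H_m=G^2[G,G]$ directly.
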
We obstruct the existence of the diffeomorphism (\ref{Diffeo Down}) by obstructing the existence of the diffeomorphism (\ref{Diffeo Up}) with the Seiberg-Witten invariants through the computations of Harris-Naylor-Park \cite{harris}. More precisely, we use the diagram in Proposition \ref{Proposition Coverings} and make the following observation regarding the Seiberg-Witten invariants of the connected sum $K3_{K_m}\#L_p\#L_p$; see Section \ref{Section SW} regarding the notation used.

\begin{proposition}\label{Proposition CS}
\begin{equation}\label{Identity Proposition CS}\overline{\Sw}_{K3_{K_m}\#L_p\#L_p} = \overline{\Sw}_{K3_{K_m}}.\end{equation}
\end{proposition}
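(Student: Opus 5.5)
The plan is to invoke the standard connected sum formula for Seiberg-Witten invariants when one summand is a rational homology sphere with positive scalar curvature or, more directly, a negative definite $\Q$-homology four-sphere with $b_1=0$. First I record the topology of $L_p$. By Remark \ref{Remark Hypothesis} it is a rational homology four-sphere with $\pi_1(L_p)=\Z/p\Z$, so $b_1(L_p)=b_2^+(L_p)=0$. Moreover $H^2(L_p)\cong H_1(L_p)\cong\Z/p\Z$ is finite of odd order. Since $b_2^+(K3_{K_m})=3>1$, the Mayer-Vietoris sequence gives
\[H^2(K3_{K_m}\#L_p\#L_p)\cong H^2(K3_{K_m})\oplus H^2(L_p)\oplus H^2(L_p),\]
and $b_2^+$ of the sum is still $3$. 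I will also use that the map $\Sp^\C(K3_{K_m}\#L_p\#L_p)\to \Sp^\C(K3_{K_m})\times\Sp^\C(L_p)\times\Sp^\C(L_p)$ is a bijection, because the gluing region $S^3$ has a unique spin$^\C$ structure.

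Second, I apply the gluing theorem for connected sums with a negative definite piece with $b_1=0$; see \cite[Lecture 2]{fintushel2006six} and the analogous step in \cite{harris,stipsicz2024definite}. For $\mathfrak{s}=\mathfrak{s}_0\#\mathfrak{t}_1\#\mathfrak{t}_2$ this gives $\Sw(\mathfrak{s})=\Sw_{K3_{K_m}}(\mathfrak{s}_0)$ whenever each $\mathfrak{t}_j$ has $c_1(\mathfrak{t}_j)^2=-b_2(L_p)=0$, which holds automatically. The reason is that the moduli space on each $L_p$ summand is a single reducible point of index $-1$, so the dimension formula is unchanged. A cleaner alternative, which I would try first, is to note that $L_p$ admits positive scalar curvature. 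It is the surgery on $\beta_p$ in $S^1\times S^3$, a codimension-3 surgery, and Gromov-Lawson applies. The wall-crossing-free vanishing-of-irreducibles argument then yields the same product formula.

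Third, I translate to the group ring. Since $H^2(L_p)$ has odd order, every class in it has a unique half, and each class is $c_1$ of exactly one spin$^\C$ structure on $L_p$. Hence each $K\in H^2(K3_{K_m})$ and each pair $(a_1,a_2)$ of torsion classes give a bijection between $I_{K+a_1+a_2}$ and $I_K$ that preserves invariants. This gives
\[\overline{\Sw}_{K3_{K_m}\#L_p\#L_p}=\overline{\Sw}_{K3_{K_m}}\cdot\Bigl(\sum_{a\in H^2(L_p)}a\Bigr)^2.\]
There is a subtlety here. Read literally, the right-hand side of (\ref{Identity Proposition CS}) must be interpreted through the identification that projects onto the $H^2(K3_{K_m})$ factor, or equivalently by forgetting torsion. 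I would state explicitly that the identity holds after pushing forward along the projection $H^2(K3_{K_m}\#L_p\#L_p)\to H^2(K3_{K_m})$, up to an overall multiplicity of $p^2$ that is the same for every $m$ and so does not affect distinguishing the manifolds.

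The main obstacle is this bookkeeping: justifying the connected sum formula with the torsion summands of $L_p$, and making precise in which sense the equality holds. The analytic input is standard.
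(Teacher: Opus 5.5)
Your proposal is correct and takes the same route as the paper. Both reduce to the connected-sum formula for a summand that is a rational homology sphere with $b_1=0$ (the paper cites Kotschick and Baykur--Stipsicz--Szab\'o, Corollary 2.11), giving $\Sw(\mathfrak{s}_0\#\mathfrak{t}_1\#\mathfrak{t}_2)=\pm\Sw_{K3_{K_m}}(\mathfrak{s}_0)$ for every extension. Your group-ring bookkeeping is sharper than the paper's, which derives (\ref{Identity Proposition CS}) directly from the definition of $\overline{\Sw}$ without noting that the two sides live in different group rings. The precise statement is $\overline{\Sw}_{K3_{K_m}\#L_p\#L_p}=\overline{\Sw}_{K3_{K_m}}\cdot\bigl(\sum_{a}a\bigr)\bigl(\sum_{b}b\bigr)$, with $a$ and $b$ running over the two distinct copies of $H^2(L_p)$ rather than as the square of a single sum. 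That is all Proposition \ref{Proposition Diffeomorphism} needs.
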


\begin{proof}Since the connected sum $L_p\# L_p$ is a $\Q$-homology four-sphere with $b_1(L_p\#L_p) = 0 = b_2(L_p\# L_p)$, it follows from \cite[Proof of Proposition 2]{Kotschick1995FourmanifoldsWS}, \cite[Corollary 2.11]{baykur2024smooth} that the identity\begin{equation}\label{SW Equality}\Sw_{K3_{K_m}}(\mathfrak{s}) = \pm \Sw_{K3_{K_m}\#L_p\#L_p}(\mathfrak{s}^{\#})\end{equation}holds for any extension $(K3_{K_m}\#L_p\#L_p, \mathfrak{s}^\#)\in \textnormal{Spin}^\C(K3_{K_m}\#L_p\#L_p)$ of the spin$^\C$-structure $(K3_{K_m}, \mathfrak{s})\in \textnormal{Spin}^\C(K3_{K_m})$. The identity (\ref{Identity Proposition CS}) follows from (\ref{SW Equality}) and the definition of the invariant $\overline{\Sw}$ discussed in Section \ref{Section SW}.\end{proof}

The main result of this section is the existence of infinitely many non-diffeomorphic irreducible smooth structures.

\begin{proposition}\label{Proposition Diffeomorphism}The set $\{X_{K_m}(p): K_m\in \mathcal{K}\}$ consists of infinitely many pairwise non-diffeomorphic irreducible four-manifolds, where $p > 1$ is an odd integer.
\end{proposition}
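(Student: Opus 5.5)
The plan has two parts: distinguish the $X_{K_m}(p)$ through their four-fold covers, and prove irreducibility topologically.

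\emph{Pairwise non-diffeomorphic.} Suppose $f: X_{K_m}(p)\to X_{K_n}(p)$ is a diffeomorphism with $m\neq n$. By Lemma \ref{lift}, $f$ lifts to a diffeomorphism $\tilde f: K3_{K_m}\#L_p\#L_p\to K3_{K_n}\#L_p\#L_p$ between the four-fold covers of Proposition \ref{Proposition Coverings}. Pullback by $\tilde f$ matches the Seiberg-Witten invariants up to the induced isomorphism $\tilde f^*$ on $H^2$ and an overall sign. Hence the numbers of basic classes and the multisets of values $|\Sw(K)|$ must coincide. By Proposition \ref{Proposition CS} and Theorem \ref{bb}(4), these invariants equal
\[
\overline{\Sw}_{K3_{K_m}}=\big(m\,PD(2[T])-(2m-1)[0]+m(-PD(2[T]))\big)^4 .
\]
Writing $t=PD(2[T])$ and expanding $(mt-(2m-1)+mt^{-1})^4$, the coefficient of the top class $t^4$ is $m^4$. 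The torsion of $H^2(K3_{K_m}\#L_p\#L_p)$ comes from $L_p$ and is odd, so the powers $t^{\pm k}$ stay distinct classes and no collapsing occurs. Thus $\max_K|\Sw(K)|$ or the top coefficient $m^4$ is a diffeomorphism invariant. Since $m\mapsto m^4$ is injective on positive integers, this forces $m=n$. This gives infinitely many pairwise non-diffeomorphic manifolds, all homeomorphic by Proposition \ref{Proposition Homeomorphism}.

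\emph{Irreducibility.} Suppose $X_{K_m}(p)=A\#B$. By Proposition \ref{presentazione}, $b_2^+=0$, so both summands are negative definite. The key step is to use $\pi_1=\pi_1(A)*\pi_1(B)$ together with the presentation
\[
G=\langle\gamma,\beta\mid\gamma^2=\beta^{2p}=1,\ \beta^p\gamma=\gamma\beta^p\rangle .
\]
In $G$ the element $\beta^p$ is central and nontrivial: it has order $2$, since the quotient $\Z/2p\Z\times\Z/2\Z$ detects it. A nontrivial free product has trivial center. Hence one factor, say $\pi_1(B)$, is trivial, and $B$ is a simply connected negative definite closed manifold. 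By Donaldson's theorem its form is diagonal, $n\langle-1\rangle$.

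To conclude, I rule out $n>0$ by a topological argument. If $B$ carries a $-1$ class $e$ that is not $2$-divisible, lift it to the double cover $E_{K_m}\#L_p$ of Proposition \ref{Proposition Coverings}, where it appears as two disjoint copies. Then $E_{K_m}\#L_p$ contains $\overline{\mathbb{CP}^2}$-type classes, and by Lemma \ref{Lemma Implicit} the irreducibility of $E_{K_m}$ should be contradicted. This lifting step is delicate. Alternatively, one lifts to the four-fold cover, which becomes $K3_{K_m}\#L_p\#L_p\#4\overline{\mathbb{CP}^2}\#\dots$, and compares $b_2$: the cover has $b_2=22$ by the Euler characteristic computation, so it cannot accommodate four extra $-1$ summands. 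This contradiction forces $B$ to be a homotopy $4$-sphere.

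This last step is the main obstacle. Concretely, I will compare $b_2$ and the intersection forms: covers of $A\#B$ with simply connected $B$ of $b_2(B)=n$ satisfy $b_2(\widetilde{X})=4b_2(A')+4n$ in a way incompatible with the known $b_2$ and signature of $K3_{K_m}\#L_p\#L_p$, namely $b_2=22$ and signature $-16$, which is indefinite. I will use Proposition \ref{presentazione}, which gives $b_2(X_{K_m}(p))=4$, and check via signature multiplicativity that no splitting with $n\ge1$ is compatible with the cover being $K3\#$(rational homology spheres).
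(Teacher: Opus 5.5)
Your proof of pairwise non-diffeomorphism follows the paper: lift via Lemma \ref{lift}, then compare $\overline{\Sw}$ using Proposition \ref{Proposition CS}. Your reduction of irreducibility to a simply connected negative definite summand $B$, using the central element $\beta^p$, also matches the paper. One small slip: $\max_K|\Sw(K)|$ is attained at the trivial class, where it equals $(2m-1)^4+12m^2(2m-1)^2+6m^4$, not $m^4$. The value $m^4$ is instead the minimum nonzero $|\Sw|$; either quantity is injective in $m$, so that part stands. The genuine gap is the final step, ruling out $n=b_2(B)\geq 1$.

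The numerical comparison you propose cannot give a contradiction. Euler characteristic and signature are multiplicative under finite covers, so the four-fold cover $\widetilde A\#4B$ of $A\#B$ automatically has $\chi=24$, $\sigma=-16$ and $b_2=22$ for every $n$. For $n=1$ one simply gets $b_2(\widetilde A)=18$ and $\sigma(\widetilde A)=-12$, with nothing inconsistent. Your formula $b_2(\widetilde X)=4b_2(A')+4n$ is not correct, since $b_2$ is not multiplicative. Also, the cover is $\widetilde A\#4B$ itself, which is diffeomorphic to $K3_{K_m}\#L_p\#L_p$; it is not $K3_{K_m}\#L_p\#L_p\#4\overline{\mathbb{CP}^2}$, so there are no extra summands whose $b_2$ could overflow. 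The route through Lemma \ref{Lemma Implicit} fails as stated as well. The splitting $E_{K_m}\#L_p\cong\widetilde A\#B\#B$ decomposes a manifold that is already reducible, and without a cancellation theorem it says nothing about $E_{K_m}$.

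The missing idea is parity. Because $b_2(L_p)=0$, the group $H_2(L_p)$ is torsion. Hence the intersection form of $E_{K_m}\#L_p$ on $H_2/\mathrm{Tor}$ is that of the Enriques surface, $-E_8\oplus H$ with $H$ the hyperbolic plane. Likewise the form of $K3_{K_m}\#L_p\#L_p$ is $2(-E_8)\oplus 3H$. Both forms are even. On the other hand, any cover of $A\#B$ with $B$ simply connected contains copies of $B$ as connected summands. Its form therefore has $Q_B\cong n\langle -1\rangle$ as an orthogonal summand, which is odd when $n\geq 1$. This is a contradiction, and it is exactly how the paper excludes a diagonal $Q_{M_2}$. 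The paper, without invoking Donaldson, also rules out an even $Q_{M_2}$ because a definite even unimodular form has rank divisible by $8>4$. Since you already invoke Donaldson's theorem to get $Q_B\cong n\langle-1\rangle$, the parity contradiction alone completes your argument.
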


\begin{proof} The claim that the set consists of infinitely many pairwise non-diffeomorphic four-manifolds follows from Proposition \ref{Proposition CS}, Lemma \ref{lift} and the Seiberg-Witten invariant computations of Theorem \ref{bb}. We proceed to prove irreducibility. Fix an odd $p > 0$ and a knot $K\in \mathcal{K}$, and set $X_p:= X_{K_m}(p)$. We proceed by contradiction and assume that there is a smooth connected sum decomposition $X = M_1\# M_2$ with $M_i$ not a homotopy four-sphere for $i = 1, 2$. Notice that $\pi_1(X)$ cannot be expressed as a free product of non-trivial
groups since it has non-trivial center, while the free product of non-trivial
groups has trivial center. Without loss of generality, suppose $\pi_1(X) = \pi_1(M_1)$ and $\pi_1(M_2) = \{1\}$. We have that $b_2(M_2) > 0$ and the intersection form of $X$ splits as $Q_X  =Q_{M_1}\oplus Q_{M_2}$. We claim that $Q_{M_2}$ is an even form. Suppose that $Q_{M_2}= m\langle 1\rangle \oplus n\langle -1\rangle$ for some $m, n \geq 0$. Freedman's result implies that $M_2$ is homeomorphic to $m\mathbb{CP}^2\# n\overline{\mathbb{CP}^2}$ \cite[Theorem 1.2.27]{gompf19994}. If there were such a decomposition this would lift to the covering of $X$, which is impossible since the intersection form of the Enriques surface is\begin{equation*}-E_8\oplus \begin{pmatrix}
0& 1 \\
1 & 0 
\end{pmatrix}\end{equation*}for $-E_8$ a negative definite, even unimodular form of rank eight \cite[p. 13]{gompf19994}. This implies that $Q_{M_2}$ is an even form of rank four; its rank is not zero since we assumed that $M_2$ is not a homotopy four-sphere. However, this is impossible as well since the rank of an even form is divisible by eight \cite[Lemma 1.2.20]{gompf19994}. We conclude that $X_p$ is irreducible.\end{proof}

\section{Proof of Theorem \ref{mainthm}}\label{Section Proof Theorem Main}

The four-manifolds of our main result were constructed in (\ref{New Manifolds}). The homeomorphism class of these four-manifolds was established in Proposition \ref{Proposition Homeomorphism}. Since the choice of knot is immaterial in terms of the homeomorphism class obtained in the construction and to synchronize notation with the statement of the theorem, we fix $X_p:= X_{K_m}(p)$ for $K_m\in \mathcal{K}$; this notation was already used in the the proof of Proposition \ref{Proposition Diffeomorphism}. The values discussed in Item (3) were calculated in Proposition \ref{presentazione} to be $\chi(X_p) = 6$, $\sigma(X_p) = - 4$ and $w_2(X_p)\neq 0$. Since $b_2(X_p) = |\sigma(X_p)|$, it follows that the intersection form of $X_p$ is negative definite. This establishes Item (2). The existence of infinitely many non-diffeomorphic irreducible smooth structures was proven in Proposition \ref{Proposition Diffeomorphism}. This concludes the proof of our main result.\hfill $\square$


\begin{remark}\label{Remark Extension}
    It is interesting to understand what is obtained from applying the cut-and-paste mechanism of Section \ref{Section Construction Mechanism} to the four-manifold $X_p$ of Theorem \ref{mainthm} along a loop $\alpha_{X_p}\subset X_p$ whose homotopy class is a generator of order two. This yields a four-manifold $X_{p, p'} = X_p\setminus \nu(\alpha_{X_p})\cup S_{p'}$ for $p' > 0$ whose fundamental group abelianizes to $ \Z/2p\Z\times \Z/2p'\Z$. 
\end{remark}

\printbibliography[heading=bibintoc,title={References}]

\end{document}